\documentclass{amsart}
\usepackage{tikz}


\newtheorem{definition}{Definition}
\newtheorem{theorem}[definition]{Theorem}
\newtheorem{proposition}[definition]{Proposition}
\newtheorem{lemma}[definition]{Lemma}
\newtheorem{claim}{Claim}
\newtheorem*{claim*}{Claim}
\newtheorem*{observation*}{Observation}
\title[Helly-type theorems for hypergraphs]{Helly-type theorems for the ordering\\ of the vertices of a hypergraph}
\author[C. Bir\'o]{Csaba Bir\'o}  
\address{Department of Mathematics University of Louisville, Louisville, KY 40292}
\email{csaba.biro@louisville.edu}
\author[J. Lehel] {Jen\H o Lehel}
\address{Department of Mathematics University of Louisville, Louisville, KY 40292}
\email{jeno.lehel@louisville.edu}
\address{Alfr\'ed R\'enyi Institute of Mathematics Budapest, Hungary}
\email{lehelj@renyi.hu}
\author[G. T\'oth] {G\'eza T\'oth}\thanks{}
\address{Alfr\'ed R\'enyi Institute of Mathematics Budapest, Hungary}
\email{toth.geza@renyi.hu}
\thanks{G\'eza T\'oth was supported by National Research, Development and Innovation Office, NKFIH, K-131529 and  ERC Advanced Grant ``GeoScape,'' No. 882971.}
\begin{document}
\begin{abstract} 
  Let $H$ be a complete $r$-uniform  hypergraph such that two vertices are marked in each edge as its `boundary' vertices.
  A linear ordering of the vertex set of $H$ is called an {\em agreeing linear order}, provided  all vertices of each edge
  of $H$ lie between its two boundary vertices. We prove the following Helly-type theorem: if there is an {agreeing linear order}
  on the vertex set of  every subhypergraph of $H$ with at most $2r-2$ vertices, then there is an agreeing linear order on the
  vertex set of $H$. We also show that the constant $2r-2$ cannot be reduced in the theorem.
  The case $r=3$ of the theorem has particular interest in the axiomatic theory of betweenness.
Similar results are obtained for further $r$-uniform hypergraphs ($r\geq 3$),
 where  one  or two vertices are marked in each edge, and 
 the linear orders need to satisfy various rules of agreement.
 In one of the cases we prove that no such Helly-type statement holds.
\end{abstract}
\maketitle

\section{Introduction}
\label{intro}

``Interest in ternary betweenness relations stems from their use in axiomatizations of geometry in the latter part of the nineteenth century\dots'' writes Fishburn. \cite[p.~59]{Fishbook}
More than a century ago 
several systems of axioms have been established  by 
 Huntington and  Kline \cite{Hunt,HuntK}
to characterize the usual betweenness of triples of points of a real line. ``No axiom of Huntington and Kline uses more than four points,'' observed Fishburn \cite{Fish}, thus expressing a basic Helly-type property of a complete finite system of betweenness: if there is an `agreeing linear order' for every subsytem on $4$ points, then there is an agreeing linear order for the whole system.
 
The investigation in the present paper is a natural extension  of our ongoing study of betweenness of convex bodies in the plane \cite{BLT} (for further details see Section \ref{between}). Here the early axiomatic context is replaced with general Helly-type questions that are presented and discussed in terms of hypergraphs.

The closest theme to our subject in hypergraph theory concerns  the concept of an ordered hypergraph, that is a set system together with a linear ordering of its elements. 
  These combinatorial structures emerge in various contexts, from the study of matrices with forbidden submatrices \cite{A},
  to the modeling of combinatorial geometry problems \cite{Furedi}. 
  Ordered hypergraphs are considered mostly in the theory of extremal hypergraphs (see \cite{Fox,Hubard}).
Meanwhile, here we are dealing with different combinatorial problems on complete ordered hypergraphs, 
which originated in the study of betweenness of convex bodies in the plane.

Let $H$ be a complete uniform hypergraph with no repeated edges, that is, a {\it clique}. If every edge $e\in E(H)$ has a set  of two
`marked' vertices, denoted $\partial{e}$, then $H$ is called here 
a {\it $2$-extreme marked clique}. A linear order $L$ on $V(H)$ is called an \emph{agreeing linear order} of $H$, 
if for every $e\in E(H)$ the set of the $L$-minimal and the $L$-maximal elements in $e$ is equal to $\partial{e}$. 
Similarly, for a set of vertices $U\subseteq V(H)$, a linear order $L$ on $U$ is called an \emph{agreeing linear order} of $U$, 
if for every $e\in E(H)$, $e\subseteq U$, 
the $L$-minimal and the $L$-maximal elements in $e$ is equal to $\partial{e}$. 
 
{A $2$-extreme marked clique with its vertex set ordered according to an agreeing linear order, if exists, becomes an ordered hypergraph.}

\begin{theorem}
\label{main}
Let  $H$ be a $2$-extreme marked $r$-uniform  clique on at least $2r-2$ vertices ($r\geq 3$). 
If every subset of $2r-2$ vertices of $V(H)$ has an agreeing linear order, then $H$ has an agreeing linear order. 
Furthermore,  there exists a clique $H$  such that every set of $2r-3$ vertices of $H$ has an agreeing linear order, but  $H$ does not.
\end{theorem}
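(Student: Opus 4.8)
The plan is to reduce everything to one elementary but decisive fact, which I will call the \emph{extreme-vertex principle}: in any agreeing linear order $L$ of a set $U$ with $|U|\ge r$, a vertex $v$ is the $L$-minimum or the $L$-maximum of $U$ if and only if $v\in\partial e$ for \emph{every} edge $e\subseteq U$ with $v\in e$. One direction is immediate, since a global extreme is the extreme of every edge it lies in and hence a boundary vertex; for the converse, if $v$ is interior to $L$ then choosing $u<_L v<_L w$ and completing $\{u,v,w\}$ to an $r$-edge exhibits $v$ as a non-boundary vertex. I expect this principle to drive both halves of the theorem, because it converts the existence of endpoints into a purely marking-theoretic (order-free) condition.

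The second tool will be a uniqueness lemma: if $|U|\ge 2r-3$ and $U$ admits an agreeing order, then that order is unique up to reversal. I would prove it by showing that every disagreement is \emph{witnessed by an edge}. If two agreeing orders $L,L'$ of $U$ differ, some pair $\{x,y\}$ that is adjacent in $L$ is inverted in $L'$; the remaining $|U|-2\ge 2r-5$ vertices split into the ``below'' and ``above'' sides of this adjacent pair, and one side $S$ has at least $r-2$ vertices (here $\lceil (2r-5)/2\rceil=r-2$ is exactly the threshold). Forming the edge $\{x,y\}\cup(\text{$r-2$ vertices of }S)$, swapping $x$ and $y$ changes which of them is the extreme of this edge, so $L$ and $L'$ would force two different values of $\partial e$, a contradiction. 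The pigeonhole count $2(r-2)$ is precisely what makes this work.

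For the Helly statement I would then induct on $n=|V(H)|$, the base case $n=2r-2$ being the hypothesis. For $n>2r-2$ every $(n-1)$-subset inherits the hypothesis and so, by induction, carries an agreeing order; I fix one such order $L_0$ on $V(H)\setminus\{z\}$ and insert the missing vertex $z$. For each other vertex $w$, the agreeing order of $V(H)\setminus\{w\}$ overlaps $L_0$ in $n-2\ge 2r-3$ vertices, so by the uniqueness lemma it agrees with $L_0$ there after a possible reversal; this pins down the position of $z$ relative to every vertex except possibly $w$, and letting $w$ vary determines a single insertion slot for $z$. The remaining task is to verify that the assembled order is genuinely agreeing: edges avoiding $z$ are handled by $L_0$, edges through $z$ are handled by the subsets used to place $z$, and the coherence of these placements is exactly what uniqueness guarantees.

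For the sharpness statement I would build $H$ on the $2r-2$ vertices $0,1,\dots,2r-3$ placed cyclically, and design the marking so that (i) for each $i$ some edge has $i$ strictly between its two boundary vertices — enforced, say, by an edge $\{i-1,i+1\}\cup M_i$ with $\partial e=\{i-1,i+1\}$ and $i\in M_i$ (indices mod $2r-2$) — and (ii) for each vertex $u$ every edge avoiding $u$ is agreeing for the single linear order $\lambda_u$ that reads the cycle starting just after $u$. Property (i) makes every vertex a non-boundary vertex of some edge, so by the extreme-vertex principle there are no feasible endpoints and $H$ has no agreeing order; property (ii) supplies an agreeing order for every $(2r-3)$-subset. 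The hard part — and the real crux of the construction — is to specify $\partial e$ for \emph{all} $r$-edges, not merely the enforcing ones, so that both demands hold at once: since an edge omits $r-2\ge 1$ vertices it must be agreeing for \emph{several} of the orders $\lambda_u$ simultaneously, and the naive rule of cutting the cycle at an omitted vertex produces conflicting boundary pairs (one already sees this for $r=4$ on six vertices). I expect the resolution to require a slightly asymmetric perturbation of the cyclic pattern, chosen so that the boundary pair of each edge is the common extreme pair of all the relevant $\lambda_u$, and checking this coherence for every edge is where the main work lies.
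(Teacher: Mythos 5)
Your overall architecture (a uniqueness-up-to-reversal lemma for sets of size $\ge 2r-3$, followed by induction with a one-vertex insertion) is close in spirit to the paper's proof for $r\ge 4$, but two of your key steps do not close as written. First, the one-edge argument for the uniqueness lemma fails because $\partial e$ is an \emph{unordered} pair. Take $x<_L y$ adjacent with the larger side $S$ above $y$, and $e=\{x,y\}\cup S'$ with $S'\subseteq S$, $|S'|=r-2$. From $L$ you learn $x\in\partial e$ and $y\notin\partial e$. In $L'$ the inversion $y<_{L'}x$ only forces $x$ to be the $L'$-\emph{maximum} of $e$ instead of the minimum; that is perfectly consistent with the same set $\partial e$, since nothing yet controls where $S'$ sits in $L'$. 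So $L$ and $L'$ do not ``force two different values of $\partial e$'' from this single edge. The paper's Lemma~\ref{unique} has to work harder: it first pins the two ends $u_1,u_n$, then uses the inequality $(j-1)+(n-i)\ge 2r-3$ to produce a \emph{second} edge lying entirely in an initial segment $\{u_1,\dots,u_j\}$ or a terminal segment $\{u_i,\dots,u_n\}$, and only the combination of the two edges yields the contradiction.

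Second, the coherence of the insertion step is not ``exactly what uniqueness guarantees.'' Uniqueness aligns each $L_w$ with $L_0$ on $V\setminus\{z,w\}$, but it does not pin down a single slot for $z$: whenever $z$ is interior to every relevant edge, two different $L_w$'s may legitimately place $z$ in different slots (both placements produce the same boundary pairs), so no unique position is determined; and once you commit to the slot coming from one $L_{w_0}$, an edge $e\ni z$ with $w_0\in e$ is only certified by some other $L_{w_1}$ that may have used a different slot, so the assembled order is not automatically agreeing. The paper's way around this is its Lemma~\ref{extremals}: the Helly hypothesis alone forces $H$ to have two \emph{extremal} vertices (vertices lying in $\partial e$ for every edge through them), and the induction removes and re-inserts an extremal vertex at the very end of the order, where its position is forced and the verification reduces to Lemma~\ref{unique}. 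Proving that extremal vertices exist is the main technical content of Section~\ref{proof4} and is missing from your plan (your extreme-vertex principle presupposes an agreeing order on $U$, so it cannot supply this). Finally, your sharpness construction is left open by your own admission; the paper's example is much simpler than a cyclic pattern: mark every edge by its first and last vertex in $(v_1,\dots,v_{2r-2})$ except the single edge $e_0=\{v_1,\dots,v_r\}$, which gets $\partial e_0=\{v_1,v_{r-1}\}$, so that $e_0$ clashes with $e_1=\{v_{r-1},\dots,v_{2r-2}\}$ while every $(2r-3)$-subset misses one of them. Note also that the paper treats $r=3$ by an entirely separate equivalence-class argument, since the $r\ge4$ machinery breaks down there.
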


In  Section~\ref{2extreme} we prove Theorem~\ref{main}, by using two different approaches. Section~\ref{proof3} proves the case $r=3$
separately (Theorem~\ref{main3}); we could not find a convenient extension of this `direct' proof for $r\geq 4$.  Section~\ref{proof4}
contains a proof for  $r\geq 4$ (Theorem~\ref{main4} through Lemmas~\ref{unique} and~\ref{extremals}); this approach does not work for
$r=3$ without making the proof of Lemma~\ref{extremals} much less transparent. 
Section~\ref{between} comments on Theorem~\ref{main3} (the case $r=3$) that has particular interest in the axiomatic theory of
betweenness (see \cite{AzNa,Fish,Hunt,HuntK}). 

Actually, a $3$-uniform $2$-extreme marked clique $H$ can be considered as `one-marked' 
  by marking the `middle' vertex $\widehat{e}=e\setminus\partial{e}$ for every $e\in E(H)$; 
  and an agreeing linear order requires that  $\widehat{e}$ be positioned between the two vertices of $\partial{e}$,
  for every $e\in E(H)$.
In Sections ~\ref{minmarked},~\ref{1extreme} and \ref{2marked}
we extend the concept of marked hypergraphs, where one or two vertices are marked in each edge,
and the marked vertices agree with particular rules in the linear ordering of the vertex set.

Given an $r$-uniform clique with edges containing one or two marked vertices, our main interest
consists in finding Helly-type theorems guaranteeing the existence of an agreeing linear order for the hypergraph.
 {We supply a table of content of the paper that may help the reader familiarize the non-standard notions}.
 \begin{table}[h!]
\centering
{\tiny\bf
 \begin{tabular}{c|c|c|c|r}
  clique& marked &  agreeing lin. ord.& Helly-number&  \\\hline\hline
   $2$-extreme marked&  $\partial{e}$&  $\partial{e}$ has the min. and max. of $e$  & $2r-2$& Section~\ref{2extreme} \\\hline 
     min-marked&  $A(e)$&$A(e)$ is the min. in $e$ & $r+1$&Section~\ref{minmarked} \\\hline 
      $1$-extreme marked&  $\widehat{e}$& $\widehat{e}$ is the min. or max. in $e$ & ---&Section~\ref{1extreme} \\\hline 
        min\&max-marked&  $\{A(e),B(e)\}$&$A(e)$ is min. $B(e)$ is max. in $e$ & $2r-2$ & Section~\ref{2marked}\\\hline 
 \end{tabular}
 \label{map}
 }
 \end{table}

In Section~\ref{minmarked} we consider  $r$-uniform min-marked  cliques,
with one vertex $A(e)$ marked in each edge $e$,  and in an agreeing linear order $A(e)$ is minimal among the vertices of $e$.
 We obtain  a Helly-type theorem whose proof reveals a characterization of min-marked  hypergraphs $H$ that have an agreeing linear order, in general, 
 in terms of a forbidden $2\times 2$ submatrix in the incidence matrix of $H$ (Theorem~\ref{char}).

\begin{theorem}
\label{leftentry}
Let  $H$ be a min-marked  $r$-uniform clique on at least $r+1$ vertices ($r\geq 3$).
Then the vertices of $H$ have an agreeing linear order 
if and only if each subhypergraph of $H$ with $r+1$ vertices has an agreeing linear order.
\end{theorem}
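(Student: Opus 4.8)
The forward implication is immediate: if $L$ is an agreeing linear order of $H$, then for any $r+1$ vertices $U$ the restriction $L|_U$ is an agreeing order of $U$, since every edge $e\subseteq U$ keeps its $L$-minimum $A(e)$. For the converse I would argue contrapositively and bound the size of a minimal obstruction. Call an induced subclique \emph{obstructed} if it has no agreeing order, and consider a minimal obstructed subclique $H_0$ on a vertex set $V_0$, so that every proper induced subclique of $H_0$ is agreeable. A subclique on at most $r$ vertices carries at most one edge and is trivially agreeable, and one on exactly $r+1$ vertices is agreeable by hypothesis; hence a minimal obstruction has $|V_0|\ge r+2$. The plan is to derive a contradiction by exhibiting a non-agreeable set of only $r+1$ vertices inside $V_0$, which is a proper subset and therefore should be agreeable.

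The main tool is the behaviour of minima under deletion. For each $v\in V_0$ the subclique on $V_0\setminus\{v\}$ (having $\ge r+1$ vertices) is agreeable, and its minimum is unique, since two candidate minima would clash on any edge containing both of them together with a third vertex; write $m_v$ for this minimum. If a single $m$ equalled $m_v$ for all $v$, then $A(e)=m$ on every edge $e\ni m$ (each such edge avoids some vertex), making $m$ a global minimum and $H_0$ agreeable; so $v\mapsto m_v$ is non-constant. As $m_v$ is not a global minimum, some edge through $m_v$ is marked by a vertex other than $m_v$, and one checks that every such ``spoiling'' edge must contain $v$. This yields a dichotomy: either (Case A) some spoiling edge of some $m_v$ is marked by a third vertex $w\ne v$, or (Case B) for every $v$ each spoiling edge of $m_v$ is marked by $v$ itself.

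In Case A the contradiction is cheap. The spoiling edge $e$ contains $m_v,v,w$ with $A(e)=w$; replacing $v$ by any outside vertex $x$ gives an edge $e'=(e\setminus\{v\})\cup\{x\}\subseteq V_0\setminus\{v\}$ through $m_v$, hence marked $m_v$. Then $e$ and $e'$ both lie in the $(r+1)$-set $e\cup\{x\}$ and force $w$ to be the minimum of an edge containing $m_v$ while $m_v$ is the minimum of an edge containing $w$ — a crossing pair no linear order can realize, so $e\cup\{x\}$ is non-agreeable. In Case B every $m_v$ is in fact below \emph{all} of $V_0\setminus\{v\}$ (a strong domination), and I would analyse the map $\phi\colon v\mapsto m_v$ together with a cycle of it. Domination supplies great freedom: the relation ``$a$ is the minimum of some edge containing $b$'' can be witnessed by almost any edge through $a,b$ lying in the appropriate deleted subclique. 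Using this I would compress an inconsistency into $r+1$ vertices according to the cycle length: for a $2$-cycle, first force (feeding in one auxiliary vertex at a time) that every edge through the pair is marked within the pair, then slide along a one-swap path to find two edges differing in a single vertex with crossing marks; for a $3$-cycle, plant the three cycle vertices together with $r-2$ shared vertices and read off a cyclic marking of the three co-edges that no minimum can satisfy; for length $\ge 4$, take two non-consecutive cycle vertices, which domination turns into a $2$-cycle whose two witnessing edges can be chosen to differ in a single vertex, again landing in an $(r+1)$-set.

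The genuine obstacle throughout is \emph{support control}: the edges witnessing a directed inconsistency may a priori be spread over as many as $2r-2$ vertices, whereas I must confine a witness to $r+1$. Case B's domination property is precisely what lets me reselect witnessing edges so that two of them differ in only one vertex, and the delicate point is making this compression uniform across all cycle lengths. Note that two edges sharing a common $(r-1)$-set but with crossing marks is exactly the forbidden $2\times2$ configuration of Theorem~\ref{char}, so the same analysis would yield that characterization as a by-product. Finally, when $r=3$ one has $2r-2=r+1$, so the worst-case support already fits in $r+1$ vertices and the length-$\ge4$ and $2$-cycle subtleties disappear, which makes the argument especially transparent in that case.
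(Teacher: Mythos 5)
Your argument takes a genuinely different route from the paper's. The paper proves this statement as Theorem~\ref{char}: it characterizes agreeability by the absence of a forbidden $2\times 2$ in the incidence matrix, localizes any forbidden $2\times2$ to $r+1$ vertices (Lemma~\ref{r+1}), shows that on exactly $r+1$ vertices this absence is equivalent to agreeability (Lemma~\ref{forbidden}), and then runs an induction producing a column of $M(H)$ with no $+1$ entry, i.e.\ a vertex that may be placed first. You instead take a minimal obstruction $H_0$ and study the map $v\mapsto m_v$ sending each vertex to the (correctly shown to be unique) minimum of the agreeable subclique $H_0-v$. Your forward direction, the lower bound $|V_0|\ge r+2$, the non-constancy of $v\mapsto m_v$, the existence and location of spoiling edges, and all of Case~A are complete and correct; Case~A in particular is a clean, cheap localization that the paper's Lemma~\ref{r+1} has to work harder for. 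What each approach buys: the paper's yields the matrix characterization as its main theorem, while yours gets it only as a by-product but is more self-contained for the Helly statement itself.

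The one place your plan needs repair is the handling of long cycles in Case~B. The $2$-cycle argument is sound (all edges through the pair $\{u,w\}$ are forced to be marked inside $\{u,w\}$, both marks occur, and a one-swap path in the Johnson graph of $r$-sets containing $\{u,w\}$ produces two edges with union of size $r+1$ and crossing marks), and your $3$-cycle gadget checks out. But the reduction you describe for cycles of length at least $4$ --- ``two non-consecutive cycle vertices become a $2$-cycle'' --- does not follow from the domination property as stated. Fortunately the case is easier than you fear: in Case~B every edge containing $v_{i+1}$ is marked inside $\{v_i,v_{i+1}\}$, so any edge containing three consecutive cycle vertices $v_i,v_{i+1},v_{i+2}$ (which exists since $r\ge 3$) would have to be marked in $\{v_{i-1},v_i\}\cap\{v_i,v_{i+1}\}\cap\{v_{i+1},v_{i+2}\}=\emptyset$, which is impossible. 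Hence in Case~B all cycles of $\phi$ have length $2$ and only your $2$-cycle argument is actually needed; you should replace the length-$\ge 4$ sketch by this observation (which also subsumes the $3$-cycle case) and write out the one-swap localization explicitly.
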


In Section~\ref{1extreme} the agreeing linear order  of a $1$-extreme marked hypergraph is investigated that requires the marked  vertex $\widehat{e}$ be either the minimal or the maximal among the vertices in each edge $e$. 
A straightforward characterization (Proposition~\ref{1exchar}) leads to the unexpected  fact that 
there is no Helly-type theorem  for the existence of an agreeing linear order for  $1$-extreme marked cliques (Proposition~\ref{general}). 

Another Helly-type result concludes the paper by characterizing those min\&max-marked cliques which admit an
 agreeing linear order where both the minimum and the maximum vertices are prescribed in each edge. 
\begin{theorem}
\label{leftrightentry}
Let $H$ be a min{\rm \&}max-marked $r$-uniform clique with 
at least $2r-2$ vertices ($r\geq 3$). Then $H$ has an agreeing linear order 
if and only if each subhypergraph of $H$ with $2r-2$ vertices has an agreeing linear order.
\end{theorem}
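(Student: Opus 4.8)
The plan is to reformulate the existence of an agreeing linear order as an acyclicity condition on an auxiliary digraph, and then establish the Helly bound by a shortest-cycle argument. In the min\&max setting every edge $e$ imposes only the binary constraints $A(e)<x$ and $x<B(e)$ for all $x\in e$, and a linear order is agreeing exactly when it extends all of these. So I would introduce the \emph{forcing digraph} $D(H)$ on $V(H)$, with an arc $u\to v$ whenever some edge $e\ni u,v$ satisfies $A(e)=u$ or $B(e)=v$, and record the elementary fact that $H$ has an agreeing linear order if and only if $D(H)$ is acyclic. Since any edge lying inside a vertex set $U$ is also an edge of $H[U]$, the digraph $D(H[U])$ is a subdigraph of $D(H)$; this gives the trivial forward implication and reduces the theorem to the statement that local $(2r-2)$-consistency forces $D(H)$ to be acyclic.

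For the converse I would argue by contradiction: assume every $(2r-2)$-subset induces an acyclic digraph while $D(H)$ contains a directed cycle, and fix a \emph{shortest} one, $v_1\to v_2\to\cdots\to v_k\to v_1$, where the arc $v_i\to v_{i+1}$ is witnessed by an edge $e_i$. The case $k=2$ is immediate, since $e_1,e_2$ both contain $v_1,v_2$, hence $|e_1\cup e_2|\le 2r-2$; padding this union to a $(2r-2)$-set $S$ (possible because $|V(H)|\ge 2r-2$) produces a subhypergraph whose agreeing order would have to satisfy both $v_1<v_2$ and $v_2<v_1$, contradicting local consistency. The substance is the case $k\ge 3$, where the goal is to manufacture a strictly shorter cycle.

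The key tool is an \emph{extraction lemma}: if $S$ is a $(2r-2)$-set, $L$ is an agreeing order of $H[S]$, and $u<_L w$, then $D(H)$ already contains the arc $u\to w$. The proof is a counting step: with $a=|\{x\in S:x>_L u\}|$ and $b=|\{x\in S:x<_L w\}|$ one gets $a+b\ge |S|=2r-2$, so $a\ge r-1$ or $b\ge r-1$. In the first case I pick an $r$-edge $g=\{u,w\}\cup R$ with $R$ consisting of $L$-larger elements than $u$, which forces $u=\min_L g$ and hence $A(g)=u$; in the second case I force $w=\max_L g$ and $B(g)=w$; either way $g$ witnesses $u\to w$. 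Granting this, I take two consecutive edges $e_1\ni v_1,v_2$ and $e_2\ni v_2,v_3$. When $|e_1\cup e_2|\le 2r-2$, I pad the union to a $(2r-2)$-set $S$; its agreeing order respects $e_1,e_2$, so $v_1<_L v_2<_L v_3$, and the extraction lemma supplies the chord $v_1\to v_3$, shortening the cycle and contradicting minimality.

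The hard part—indeed the only delicate case—is $|e_1\cup e_2|=2r-1$, i.e.\ when $e_1$ and $e_2$ meet \emph{only} in $v_2$, since then no $(2r-2)$-set can contain both edges. I would resolve it by deleting one vertex of $e_2\setminus e_1$ (which is nonempty for $r\ge 3$) to obtain a $(2r-2)$-set $S'$ still containing $e_1$ together with $v_1,v_3$, and then split on the agreeing order $L'$ of $H[S']$. If $v_1<_{L'}v_3$, the extraction lemma yields the chord $v_1\to v_3$ and a shorter cycle. If instead $v_3<_{L'}v_1$, then because $e_1\subseteq S'$ forces $v_1<_{L'}v_2$ we obtain $v_3<_{L'}v_2$, so the extraction lemma produces the arc $v_3\to v_2$, which together with the original arc $v_2\to v_3$ (witnessed by $e_2$, an edge of $H$) forms a $2$-cycle in $D(H)$. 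In every branch the minimality of $k$ is contradicted, so $D(H)$ is acyclic and $H$ admits an agreeing linear order, completing the argument along the same lines as Theorem~\ref{main}.
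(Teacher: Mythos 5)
Your proof is correct, and it shares its skeleton with the paper's first proof of Theorem~\ref{2char}: the same forcing digraph (the paper's $G$ is exactly your $D(H)$), the same equivalence between agreeing orders and acyclicity, the same exclusion of directed $2$-cycles via $|e\cup f|\le 2r-2$, and the same shortest-cycle framework. Where you diverge is in how the case $k\ge 3$ is killed. The paper first notes that a shortest cycle $C$ has no chord, so the witnessing edge $e$ of the arc $(a_1,a_2)$ meets $C$ only in $\{a_1,a_2\}$ and $C$ contains no full $r$-edge, forcing $k\le r-1$; it then chooses a single $r$-edge $f\subseteq e\cup C$ containing \emph{all} of $C$ and observes that wherever $A(f)$ falls (in $e\setminus C$ or at some $a_i$) it creates a directed $2$-cycle. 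You instead prove an extraction lemma --- in any locally consistent $(2r-2)$-set, $u<_L w$ already forces the arc $u\to w$ in $D(H)$, by the counting argument $a+b\ge 2r-2$ --- and use it to manufacture the chord $v_1\to v_3$, with a separate treatment of the case $|e_1\cup e_2|=2r-1$ where the two witnessing edges meet only in $v_2$. Both arguments are sound; the paper's choice of one edge covering the whole cycle is more economical and avoids your case split, while your extraction lemma is a more reusable statement (it shows the arc relation restricted to any consistent $(2r-2)$-set is a total preorder) and makes the role of the bound $2r-2$ in the counting step more transparent. The paper also offers a second proof deducing the result from Theorem~\ref{main} via the Johnson graph, which is unrelated to your route.
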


We prove Theorem ~\ref{leftrightentry} as a corollary of Theorem~\ref{main}, and we also give an independent graph theory proof, both in Section~\ref{2marked}.

Note that in the $2$-extreme marked and $1$-extreme marked cases, if $L$ is an agreeing linear order, then the dual of $L$ (in which every relation is reversed) is also agreeing. We will use the notation $L^d$ to denote the dual of the linear order $L$.

\section{$2$-extreme marked cliques}
\label{2extreme}

Let $H$ be an $r$-uniform  clique. If every edge of $H$ has two marked vertices, then $H$ is called here 
a {\it $2$-extreme marked clique}.  {The name `$2$-extreme marked' indicates that the marked vertices of an edge should become the maximum and the minimum among the vertices of each edge in an agreeing linear order.}
The set of the two marked vertices of an edge $e$  is denoted by $\partial e$. 

Let $U\subseteq V(H)$ and let
$L$ be a linear ordering on $U$. We say that  an edge $\{v_1,\ldots,v_r\}\subseteq U$
  \emph{agrees} with $L$, provided $v_1<_Lv_2<_L\cdots<_Lv_r$ and $\partial e=\{v_1,v_r\}$; furthermore, 
  $L$ is called an \emph{agreeing linear order} on $U$, 
 if every edge entirely in $U$ agrees with $L$. 
 
In the next sections we prove Theorem~\ref{main},  a Helly-type theorem on the existence of an agreeing linear order for $2$-extreme marked cliques. The special case $r=3$ is proved separately in Section~\ref{proof3}, partly because, as mentioned in Section~\ref{between},  this case has a  particular interest in the axiomatic theory of betweenness (see \cite{AzNa,Fish,Hunt,HuntK}). It is worth noting that
we could not find a convenient extension of the `direct' proof of this special case for $r\geq 4$. The proof of the general case in Section~\ref{proof4} uses a different approach.

\subsection{The case $r=3$}
\label{proof3}

Here we prove  the special case $r=3$ of Theorem~\ref{main}. 

\begin{theorem}
\label{main3}
Let $H$ be a $2$-extreme marked $3$-uniform clique. If every set of $4$ vertices of $H$ has an agreeing linear order, then $H$ has an agreeing linear order. 
\end{theorem}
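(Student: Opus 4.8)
The plan is to argue by induction on the number of vertices $n = |V(H)|$. The base case $n = 4$ is exactly the hypothesis, and the case $n = 3$ is trivial (a single triple is ordered, up to reversal, by placing its non-marked vertex in the middle). For the inductive step with $n \ge 5$, I would fix an arbitrary vertex $z$ and pass to $H - z$. Since every $4$-element subset of $V(H)\setminus\{z\}$ is also a $4$-element subset of $V(H)$, the hypothesis is inherited by $H - z$, so by induction $H - z$ has an agreeing linear order $L'\colon w_1 <_{L'} w_2 <_{L'} \cdots <_{L'} w_{n-1}$. It then suffices to show that $z$ can be inserted into $L'$ so that every triple containing $z$ also agrees; the resulting order is automatically agreeing on all of $H$, because every triple avoiding $z$ already agrees with $L'$.

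To locate $z$, observe that for indices $i < j$ the middle vertex of the triple $\{w_i, w_j, z\}$ records the position of $z$ relative to the interval bounded by $w_i$ and $w_j$: middle $w_i$ forces $z < w_i$, middle $w_j$ forces $z > w_j$, and middle $z$ forces $w_i < z < w_j$. The idea is to read off, for each $i$, a single bit $\epsilon_i \in \{+,-\}$ recording whether $z$ should lie to the right ($+$) or to the left ($-$) of $w_i$, using any triple $\{w_i, w_l, z\}$ with $l \ne i$. \emph{The main obstacle is to show that $\epsilon_i$ is well defined}, i.e.\ independent of the auxiliary partner $w_l$. This is precisely where the $4$-point hypothesis enters: for two partners $w_l, w_{l'}$ I would apply the hypothesis to the $4$-set $\{w_i, w_l, w_{l'}, z\}$, whose agreeing order $M$, after possibly replacing it by its dual $M^d$, restricts to $w_i, w_l, w_{l'}$ in their $L'$-order (both $L'$ and $M$ induce the same betweenness on this triple, which for three points pins the order down up to reversal). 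In $M$ the vertex $z$ occupies a single slot, and $M$ induces the correct middles of the sub-triples $\{w_i, w_l, z\}$ and $\{w_i, w_{l'}, z\}$; hence both triples report the same side of $w_i$ for $z$, proving $\epsilon_i$ well defined.

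Granting this, monotonicity of $(\epsilon_i)$ along $L'$ is immediate: if $\epsilon_i = -$ and $j > i$, then the triple $\{w_i, w_j, z\}$ (read with partner $w_j$) must have middle $w_i$, so $z < w_i < w_j$ and hence $\epsilon_j = -$. Thus there is a threshold $k$ with $\epsilon_1 = \cdots = \epsilon_k = +$ and $\epsilon_{k+1} = \cdots = \epsilon_{n-1} = -$, and I would insert $z$ into the gap between $w_k$ and $w_{k+1}$ (taking the extreme gaps when $k = 0$ or $k = n-1$). It remains to verify that every triple $\{w_i, w_j, z\}$ with $i < j$ agrees with this placement, which splits into three cases: if $j \le k$ then $\epsilon_i = \epsilon_j = +$ forces the middle to be $w_j$; if $i > k$ then $\epsilon_i = \epsilon_j = -$ forces the middle to be $w_i$; and if $i \le k < j$ then $\epsilon_i = +$ and $\epsilon_j = -$ together force the middle to be $z$. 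In each case the forced middle is exactly what the insertion of $z$ into gap $k$ requires, so the extended order is agreeing and the induction goes through.

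I would expect the only genuinely delicate point to be the well-definedness of $\epsilon_i$, and within it the bookkeeping of orientations, namely the freedom to pass to the dual $M^d$ so that the three fixed vertices sit in their $L'$-order. Once the $4$-point hypothesis is used to remove the dependence on the partner vertex, the remaining monotonicity and case analysis are routine. This argument is specific to $r = 3$ because the ``one bit per vertex'' encoding of a vertex's side relative to each $w_i$ relies on the middle of an edge being a single vertex; for $r \ge 4$ I would not expect such a direct reconstruction to survive, which is consistent with the authors' remark that a different approach is needed there.
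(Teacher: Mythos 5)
Your proof is correct, but it follows a genuinely different inductive scheme from the paper's. The paper fixes a vertex $x$ that is the middle of some edge, defines the relation $u\sim_x v$ iff $x\in\partial\{x,u,v\}$, uses the $4$-point hypothesis to show this is an equivalence relation with exactly two classes $A_1,A_2$, applies induction to the two smaller cliques $A_1\cup\{x\}$ and $A_2\cup\{x\}$, and concatenates the two orders at $x$; the delicate step there is verifying edges with two vertices in one class and one in the other. You instead delete an arbitrary vertex $z$, order the remaining $n-1$ vertices by induction, and re-insert $z$, with the $4$-point hypothesis doing its work in showing that the ``side of $w_i$'' bit $\epsilon_i$ is independent of the auxiliary partner. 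Both arguments ultimately reduce to inspecting $4$-element configurations, and your well-definedness lemma is a close cousin of the paper's transitivity argument for $\sim_x$ (each normalizes an agreeing order of a $4$-set against a known order of three of its points and reads off where the fourth must sit). What your version buys is a single inductive call on $n-1$ vertices and no need to locate a non-extremal vertex or to prove that $\sim_x$ has at most two classes; what the paper's version buys is that the final verification is organized around the two-class structure, which is the viewpoint the authors reuse in their discussion of betweenness axioms. The steps you flag as needing care (the dualization of $M$ so that the three old vertices sit in their $L'$-order, the monotonicity of the $\epsilon_i$, and the three-case check at the chosen gap) are all sound as written, so the argument goes through.
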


\begin{proof}
  Let $x\in V(H)$. We define a binary relation $\sim\;=\;\sim_x$ on $V(H)\setminus\{x\}$ with respect to $x$: let $u\sim v$ if $u=v$ or $u\neq v$ and $\partial\{x,u,v\}\neq\{u,v\}$.
We show that the relation $\sim$ is an equivalence relation.

Only transitivity is nontrivial. Let $u\sim v$ and $v\sim w$. Suppose for a contradiction that $\partial\{x,u,w\}=\{u,w\}$. The set $\{x,u,v,w\}$ has an agreeing linear order $L$. In $L$, we have $x$ between $u$ and $w$; due to symmetry we may assume $u<_Lx<_Lw$. So where is $v$? Since $u\sim v$, we must have $v<_L x$, but since $v\sim w$, we must have $v>_L x$, a contradiction.

We claim  that 
there are at most two equivalence classes of the relation $\sim$. 
We will prove this by showing that 
$u\not\sim v$ and $v\not\sim w$ imply $u\sim w$. Suppose this is not true, and chose vertices with $u\not\sim v$, $v\not\sim w$, and $u\not\sim w$. The set $\{x,u,v,w\}$ has an agreeing linear order $L$. Similarly as above, we may assume $u<_L x<_L w$. This time, $u\not\sim v$ implies $v>_L x$, and $v\not\sim w$ implies $v<_L x$ in $L$, a contradiction. 
Note also that  if $x$ is not in the boundary of some edge, say $x\not\in\partial\{x,u,v\}$, then there are exactly two equivalence classes, since  $\partial\{x,u,v\}=\{u,v\}$ implies $u\not\sim v$.

The proof of the theorem proceeds by induction on $|V(H)|$. If $|V(H)|=2$, the statement is trivial. Let $|V(H)|\geq 3$, and chose a vertex $x$ that is not in the boundary of some edge. Let the two equivalence classes with respect to $x$ be $A_1$ and $A_2$. By the hypothesis, $A_i\cup\{x\}$ has an agreeing linear order $L_i$ for $i=1,2$.

Observe that $x$ is the greatest or the least element of $L_1$ and $L_2$. Indeed, if  
$u,v\in A_i$ was such that $u<x<v$ in $L_i$, then $\partial\{x,u,v\}=\{u,v\}$, so $u\not\sim v$ contradicting $u,v\in A_i$. 
After possibly taking duals, we may assume that $x$ is the greatest element of $L_1$ and the least element of $L_2$. Concatenate $L_1$ and $L_2$ by adding every relation $A_1<A_2$ to form the linear order $L$ on $V(H)$. 

We conclude the proof by showing that $L$ is an agreeing linear order. 
Let $e\in E(H)$. If $e\subseteq A_i\cup\{x\}$ for some $i$, then $e$ agrees with $L$. If $e=\{u,x,v\}$ with $u\in A_1$ and $v\in A_2$, then $u\not\sim v$ exactly means $\partial\{u,x,v\}=\{u,v\}$, so $e$ agrees with $L$. 

The remaining case (up to symmetry) is $e=\{u,v,w\}$, $u,v\in A_1$, $u<v$ in $L_1$, and $w\in A_2$. 
The set $\{u,v,w,x\}$ has an agreeing linear order $L'$. Since $u\not\sim w$, we may assume (up to duality) that $u<x<w$ in $L'$. Since $v\not\sim w$, we have $v<x$ in $L'$. Note that $u<v<x$ in $L_1$ implies $\partial\{u,v,x\}=\{u,x\}$, so in $L'$, we must have $u<v<x<w$. Since $L'$ is agreeing, this shows $\partial\{u,v,w\}=\{u,w\}$, and thus $e$ agrees with $L$.
\end{proof}

\subsection{The case $r\geq 4$}
\label{proof4}

Here we restate Theorem~\ref{main} and prove it for $r\geq 4$. The proof uses two lemmas.
\begin{lemma}\label{unique}
Let $r\geq 4$, and assume that $L$ is an agreeing linear order for the $2$-extreme marked $r$-uniform clique $H$ with $n$ vertices.  Then $L$ is unique (up to duality) if and only if $n\geq 2r-3$.
\end{lemma}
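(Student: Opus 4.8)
The plan is to fix the given order $L$ and identify $V(H)$ with $[n]$ so that $L$ becomes the natural order $1<2<\cdots<n$; since $H$ is a clique, its edges are \emph{all} $r$-subsets, and for each such $S$ we have $\partial S=\{\min_L S,\max_L S\}$. A competing order $L'$, whose rank function I write as $p$, is agreeing precisely when the natural extremes of every $r$-set are its $L'$-extremes, i.e. every non-extreme element of $S$ is $L'$-between $\min S$ and $\max S$. Spelling out which triples arise this way gives the working reformulation, call it $(*)$: \emph{for every $a<b<c$ with $c-a\ge r-1$, the element $b$ lies strictly $L'$-between $a$ and $c$.} I would first pin down the global extremes. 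Call a vertex \emph{always-boundary} if it lies in $\partial e$ for every edge containing it; this depends only on $(H,\partial)$. A short argument (for $n\ge r$, every $i$ with $2\le i\le n-1$ is the middle vertex of some $r$-set) shows that $1$ and $n$ are the only always-boundary vertices. As the $L'$-minimum and $L'$-maximum are necessarily always-boundary, they must be $1$ and $n$; replacing $L'$ by its dual if needed (allowed, since we claim uniqueness only up to duality), I may assume $p(1)=1$ and $p(n)=n$.

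The core is to prove $p=\mathrm{id}$ by a two-sided induction. From the bottom: assuming $p(i)=i$ for all $i<k$, let $m$ be the element of $L'$-rank $k$, i.e. the $L'$-least of the not-yet-placed set $\{k,\dots,n\}$, and suppose for contradiction $m>k$. Two applications of $(*)$ close the step. Applying $(*)$ to $(k-1,k,c)$ for $c\ge k+r-2$ forces $p(k)<p(c)$, which already contradicts $p(m)=k<p(k)$ whenever $m\ge k+r-2$; and applying $(*)$ to $(k,m,c)$ with $c=k+r-1$ forces $p(m)$ to lie between $p(k)$ and $p(c)$, which is impossible since $p(m)=k$ is below both. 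Together these rule out every $m\in\{k+1,\dots,n\}$ and place $k$. The step needs the far endpoint $c=k+r-1\le n$, so the bottom induction runs for $k\le n-r+1$; the symmetric induction from the top (using $p(n)=n$, valid because $(*)$ is invariant under order-reversal) places every $k\ge r$.

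The ranges $[1,n-r+1]$ and $[r,n]$ cover $[n]$ once $n\ge 2r-2$, and when $n=2r-3$ they miss only the single index $r-1$, which is then forced by elimination; hence $p=\mathrm{id}$ and $L'=L$ in every case $n\ge 2r-3$. The main obstacle, and the exact point where $r\ge 4$ and $n\ge 2r-3$ are used, is the second application of $(*)$ eliminating the adjacent value $m=k+1$: this is nothing but an adjacent transposition in the middle of the order, and excluding it requires a far endpoint at distance $r-1$, i.e. enough room $n\ge 2r-3$ (indeed $c=k+r-1$ must not exceed $n$, and already at $k=2$ one needs $n\ge r+1$, forcing $r\ge 4$). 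Everything else is bookkeeping around the interlocking of the two inductions.

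This simultaneously suggests the converse. For $n\le 2r-4$ I would produce a second agreeing order directly by an adjacent transposition: swapping $v_k$ and $v_{k+1}$ preserves agreeing-ness iff no edge has exactly one of them as an extreme while containing the other, which fails to occur precisely when $n-r+2\le k\le r-2$. Such a $k$ exists if and only if $n\le 2r-4$, and the resulting order differs from both $L$ and $L^d$ (a single transposition cannot equal a full reversal for $n\ge 3$), so $L$ is not unique up to duality. Combining the two directions yields the stated equivalence.
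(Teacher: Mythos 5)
Your proof is correct. It runs on the same engine as the paper's: the hypothesis $n\ge 2r-3$ is used to guarantee, for each pair that needs to be pinned down, an edge inside a prefix $\{u_1,\dots,u_j\}$ with $j\ge r$ or a suffix $\{u_i,\dots,u_n\}$ with $i\le n-r+1$ whose prescribed boundary forces the pair's relative order --- your covering of $[n]$ by $[1,n-r+1]\cup[r,n]$ up to the single index $r-1$ is exactly the paper's pigeonhole $(j-1)+(n-i)\ge 2r-3$, hence $j\ge r$ or $i\le n-r+1$. The packaging differs, though. The paper fixes a pair $u_i<_{L_1}u_j$ reversed in $L_2$, first sandwiches both between $u_1$ and $u_n$ using one auxiliary edge through all four of $u_1,u_i,u_j,u_n$ (which is where its hypothesis $r\ge 4$ enters), and then contradicts this with a single prefix or suffix edge; you instead anchor $p(1)=1$ and $p(n)=n$ by an ``always-boundary'' argument (the paper gets the same anchoring from edges containing $\{u_1,u_j,u_n\}$), run a two-sided induction on $L'$-rank placing one vertex per step, and recover the one uncovered index $r-1$ by elimination; your $r\ge 4$ enters through the requirement $k+r-1\le n$ already at $k=2$. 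Your converse is the paper's adjacent-transposition construction with the exact admissible window $n-r+2\le k\le r-2$ made explicit rather than just the midpoint $\lfloor n/2\rfloor$. The only blemish, shared with the paper, is the degenerate range $n\le 2$, where a transposition coincides with passing to the dual; this never occurs where the lemma is invoked.
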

\begin{proof}
Let $(u_1,\ldots,u_n)$ be an agreeing linear order of the vertices of $H$. If $n\leq 2r-4$ and $r\geq 4$, then $u_{\lfloor n/2\rfloor}$ and  $u_{\lfloor n/2\rfloor+1}$ are  not boundary vertices  in any edge. Therefore, they can be swapped to obtain another agreeing linear order. Because the agreeing linear order is not unique, $n\geq 2r-3$ follows.

Let $n\geq 2r-3$. Let $L_1=(u_1,\ldots,u_n)$ and $L_2$ be agreeing linear orders of $H$.

\begin{claim*}\ 
\begin{itemize}
\item[--] If $u_1<_{L_2}u_n$ then for all $i<j$, we have $u_i<_{L_2}u_j$;
\item[--] If $u_1>_{L_2}u_n$ then for all $i<j$, we have $u_i>_{L_2}u_j$.
\end{itemize}
\end{claim*}

\begin{proof}

Assume first $u_1<_{L_2}u_n$. If $i=1$ and $j=n$, the statement is trivial. If $i=1$ and $j<n$, then let $e$ be an edge with $\{u_1,u_j,u_n\}\subseteq e$; since $\partial e=\{u_1,u_n\}$ (based on $L_1$), we have that $u_j$ must be between $u_1$ and $u_n$ in $L_2$, hence $u_i=u_1<_{L_2}u_j<_{L_2}u_n$. Similar simple argument applies if $i>1$ and $j=n$. So the only case remains when $i>1$ and $j<n$.

Observe that because $(j-1)+(n-i)\geq n\geq 2r-3$, we have
\begin{equation}\label{eq:onesidelarge}
j\geq r\qquad\text{or}\qquad i\leq n-r+1.
\end{equation}

Suppose for a contradiction that $u_j<_{L_2}u_i$.
Let $A\subseteq V(H)\setminus \{u_1,u_i,u_j,u_n\}$ be an $(r-4)$-element subset of $V(H)$, and let $e=A\cup\{u_1,u_i,u_j,u_n\}$. Since $\partial e=\{u_1,u_n\}$, we have 
that $u_i$ and $u_j$ are both between $u_1$ and $u_n$ in $L_2$, so specifically,
\begin{equation}\label{eq:four}
u_1<_{L_2}u_j<_{L_2}u_i<_{L_2}u_n.
\end{equation}

Recalling (\ref{eq:onesidelarge}), suppose first that $j\geq r$. This means that there exists an edge $f\subseteq\{u_1,\ldots,u_j\}$ such that $\{u_1,u_i,u_j\}\subseteq f$. Since $\partial f=\{u_1,u_j\}$, it follows that $u_i$ is between $u_1$ and $u_j$ in $L_2$, contradicting (\ref{eq:four}). Now suppose that $i\leq n-r+1$. Then there exists $g\subseteq\{u_i,\ldots,u_n\}$ such that $\{u_i,u_j,u_n\}\subseteq g$. Since $\partial g=\{u_i,u_n\}$, it follows that $u_j$ is between $u_i$ and $u_n$ in $L_2$, contradicting (\ref{eq:four}).

Following a similar argument for $u_1>_{L_2}u_n$, equation (\ref{eq:four}) turns into
\[
u_n<_{L_2}u_i<_{L_2}u_j<_{L_2}u_1,
\]
and a similar argument to the one above leads to a contradiction.
\end{proof}

It is now easy to see how this technical claim implies the Lemma. If $u_1<_{L_2}u_n$, then every pair of vertices is ordered the same in $L_2$ as in $L_1$, so $L_1=L_2$. If $u_1>_{L_2}u_n$, then every pair of vertices is ordered the opposite in $L_2$ as in $L_1$, so $L_2=L_1^d$.
\end{proof} 

A vertex $\ell\in V(H)$ is an \emph{extremal} vertex of a $2$-extreme marked clique $H$, if every $e\in E(H)$  containing $\ell$ satisfies $\ell\in\partial e$. Note that $H$ has at most two extremal vertices. 

\begin{lemma}
\label{extremals}
For $r\geq 4$, let $H$ be a $2$-extreme marked $r$-uniform  clique with  $|V(H)|\geq 2r-2$.
If every set of $2r-2$ vertices of $H$ has an agreeing linear order, then $H$ has exactly two extremal vertices.
\end{lemma}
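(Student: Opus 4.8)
The statement asserts two extremal vertices; since a clique has at most two extremal vertices (three of them would all have to lie in $\partial e$ for a common edge $e$, impossible as $|\partial e|=2$), it suffices to produce at least two.

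My first step is to reformulate extremality in terms of windows. Call a set of $2r-2$ vertices a \emph{window}; by hypothesis each window $S$ carries an agreeing order $L_S$, unique up to duality by Lemma~\ref{unique} (as $2r-2\ge 2r-3$). I would prove the reformulation: a vertex $v$ is extremal if and only if $v$ is an endpoint (the minimum or the maximum) of $L_S$ for every window $S\ni v$. For the forward direction, if $v$ were strictly between two vertices $x,y$ of some window $S$, I would enlarge $\{x,v,y\}$ to an edge $e\subseteq S$ by adjoining $r-3$ further vertices of $S$ (possible since $|S\setminus\{x,v,y\}|=2r-5\ge r-3$); then $v$ lies strictly between $\min_{L_S}e$ and $\max_{L_S}e$, so $v\notin\partial e$, contradicting extremality. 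Conversely, a violating edge $e\ni v$ with $v\notin\partial e$ can be completed to a window $S\supseteq e$ by adjoining $r-2$ vertices, and then $v$ is forced strictly between the two vertices of $\partial e$ in $L_S$, so $v$ is not an endpoint of $L_S$.

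With this reformulation in hand, the task is to exhibit two vertices that are endpoints of the agreeing order of every window containing them. I would obtain these as the endpoints of a global linear order built by stitching the window orders together. Two windows differing by a single vertex share $2r-3$ vertices, so by Lemma~\ref{unique} their agreeing orders induce the same order on the common part; moreover, for any fixed pair $x,y$ the windows containing both are connected under single-vertex swaps (the relevant Johnson graph is connected for $n>2r-2$, while the case $n=2r-2$ is trivial since $H$ is then its own unique window). Propagating along such swaps shows that the relative order of $x$ and $y$ is the same in every window containing both, and I would define $x<_L y$ by this common value. Totality, antisymmetry, and transitivity (the last checked inside a single window containing all three vertices, which exists because $2r-2\ge 3$) make $L$ a linear order whose restriction to each window $S$ equals $L_S$ up to duality. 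In particular the two endpoints of $L$ are endpoints of every window containing them and hence, by the reformulation, extremal; together with the a priori bound of at most two, this yields exactly two extremal vertices.

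The delicate point is the coherent stitching: I must guarantee that the pairwise comparisons are path independent, which rests entirely on Lemma~\ref{unique} applied to the $(2r-3)$-vertex overlaps and on the swap connectivity of the windows through a fixed pair. I expect this to be the main obstacle, together with the fact that the same construction in effect recovers a global agreeing order. A more economical argument proving only the present lemma would instead try to localize a non-extremality witness to a window overlapping a fixed reference window in $2r-3$ vertices; but a witnessing edge may meet the reference window in as few as one vertex, so such localization is not automatic, which is precisely why the global stitching seems unavoidable here.
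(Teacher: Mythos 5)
Your reduction of extremality to the window condition is correct and cleanly argued, and the overall plan---a global order stitched from the window orders, whose two endpoints are then extremal---would work; indeed it would prove Theorem~\ref{main4} outright, whereas the paper proves the present lemma by induction on $n$, deleting a non-extremal vertex $x$ and showing that the two extremal vertices of $H-x$ stay extremal in $H$. But the stitching step, which you yourself flag as the delicate point, has a genuine gap. Lemma~\ref{unique} determines each $L_S$ only up to duality, so ``the relative order of $x$ and $y$ in $L_S$'' is not a well-defined quantity: replacing $L_S$ by $L_S^d$ flips it. Your propagation argument (connectivity, under single-vertex swaps, of the windows containing a fixed pair $\{x,y\}$) shows only that the relative order of that one pair is determined up to a single global flip; this flip is chosen independently for each pair, and nothing in the proposal coordinates the choices across pairs. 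Consequently the transitivity check ``inside a single window $S$ containing $x,y,z$'' breaks down: the comparison of $x,y$ may have been read off from one orientation of $L_S$ while that of $y,z$ was read off from the dual orientation, and then $x<_Ly$ and $y<_Lz$ give no information about $x$ versus $z$. In short, $<_L$ as defined is one of exponentially many relations depending on uncoordinated binary choices, and only a coherent choice yields a linear order.

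The repair is to anchor the orientations: fix $a\neq b$ and orient every window containing $\{a,b\}$ so that $a<b$. Since $2r-2\geq 6$, any three or four further vertices fit together with $a,b$ into a single window, and two anchored windows sharing $2r-3$ vertices including $a$ and $b$ must induce \emph{equal} (not dual) orders on their intersection by Lemma~\ref{unique}; this makes all pairwise comparisons well defined and transitive simultaneously. One must then still verify that the resulting $L$ restricts to an agreeing order on windows \emph{not} containing both $a$ and $b$ --- e.g.\ edge by edge, placing each edge $e$ into the anchored window containing $e\cup\{a,b\}$, which has at most $r+2\leq 2r-2$ vertices --- before concluding that the endpoints of $L$ are endpoints of \emph{every} window containing them. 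None of this coordination appears in the proposal, so as written the construction of $L$ does not go through.
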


\begin{proof}
  We proceed by induction on the number $n$ of vertices. If $n=2r-2$, then $H$ has an agreeing linear order $L$.
  The least element $\ell_1$ and the greatest element $\ell_2$ of $L$ are clearly  extremal vertices of $H$.

Now let $n>2r-2$, and suppose that the lemma is true for $n-1$. Let $x\in V(H)$ be a non-extremal vertex of $H$,
and let $H'=H-x$. We apply the hypothesis to find extremal vertices $\ell_1$ and $\ell_2$ in $H^\prime$.
Next consider the edge set
\[
F=\{\{x,\ell_1,\ell_2,v_1,\ldots,v_{r-3}\}:v_1,\ldots,v_{r-3}\in V(H)\setminus\{x,\ell_1,\ell_2\}\}.
\]

\begin{claim}
\label{claim1}
For some  $f\in F$ we have $x\not\in\partial f$.
\end{claim}

\begin{proof}
  Since $x$ is not extremal in $H$, we have $x\not\in\partial e$ for some $e\in E(H)$ containing $x$. 
  Let $S=e\cup\{\ell_1, \ell_2\}$. If $S=e$ then we are done, because in this case $e\in F$. Hence we have
 $r+1\leq |S|\leq r+2\leq 2r-2$. 
 
 Let $L$ be an agreeing  linear order on $S$. 
Because $x\not\in\partial e$, the vertex  $x$ is not the minimal or maximal
element of $S$ in $L$.  
If $\ell_1$ and $\ell_2$ are  the minimal and maximal elements of $S$ in $L$, then there is a subset $f\subseteq S$, $|f|=r$, containing $\{x,\ell_1,\ell_2\}$, that is,  $f$ is an edge in $F$
satisfying  $x\not\in\partial f$, as required. We show that this is the case.

Suppose on the contrary that  $a\in e\setminus\{x,\ell_1, \ell_2\}$ is the $L$-minimal or $L$-maximal element of $S$.
  Take any subset $g\subseteq S$,
  $|g|=r$, such that $a\in g$ and $x\not\in g$. Then $\partial g=\{\ell_1,\ell_2\}$, because $g$ is an edge in $H^\prime$.    
   Since $L$ is an agreeing linear order on $g\subseteq S$,  
and  $a\in g$ is a minimal or maximal element of $S$ in $L$, we obtain $a\in\partial g$, 
 a contradiction.
\end{proof}

\begin{claim}
\label{claim2}
For every  $f\in F$ we have 
$x\not\in\partial f$.
\end{claim}

\begin{proof}
By Claim~\ref{claim1}, $x\not\in\partial f$ 
 for some $f\in F$.
Assume for a contradiction that  $x\in\partial f^\prime$  for some $f^\prime\in F$.  
Let these edges be
\[
f=\{x,\ell_1,\ell_2,v_1,\ldots,v_{r-3}\},\qquad f'=\{x,\ell_1,\ell_2,v_1',\ldots,v_{r-3}'\}.
\]
Notice that this step assumes $r\geq 4$. 
Since $|f\cup f'|\leq 2r-3$, we have an agreeing linear order $L$ on $f\cup f'$.

Let $W$ be any $(r-2)$-element subset of $V=\{v_1,\ldots,v_{r-3},v_1',\ldots,v_{r-3}'\}$.
Then, by the hypothesis, $\partial(W\cup\{\ell_1,\ell_2\})=\{\ell_1,\ell_2\}$. This means that 
in $L$, every element of $V$ is between $\ell_1$, and $\ell_2$. On the other hand, due to $x\in\partial f^\prime$,
we have that $x$ is not between $\ell_1$ and $\ell_2$. So up to symmetry and the possible exchange of $\ell_1$ and $\ell_2$, 
 we obtain 
\[
\ell_1<_LV<_L\ell_2<_Lx
\]
Then $f\subseteq V\cup\{x,\ell_1,\ell_2\}$ implies $x\in\partial f$, a contradiction.
\end{proof}

We show that both $\ell_1$ and $\ell_2$ are extremal in $V(H)$. 
It is enough to show the statement for $\ell_1$; the proof for $\ell_2$ is similar. If $\ell_1$ is not extremal, there exists an edge $f=\{\ell_1,x,v_1,\ldots,v_{r-2}\}$ for which $\ell_1\not\in\partial f$. The set $\{x,\ell_1,\ell_2,v_1,\ldots,v_{r-2}\}$ has an agreeing linear order $L$.

Let $V=\{v_1,\ldots,v_{r-2}\}$, and let $W$ be any $(r-3)$-element subset of $V$. Since $\partial (V\cup\{\ell_1,\ell_2\})=\{\ell_1,\ell_2\}$, we have that every element of $V$ is between $\ell_1$ and $\ell_2$ in $L$. From the fact that $\ell_1\not\in\partial f$, some $v_i$ and $x$ must surround $\ell_1$ in $L$. So up to symmetry, 
\[
x<_L\ell_1< _LV<_L\ell_2
\]
Since $e=W\cup\{x,\ell_1,\ell_2\}\in F$, this contradicts $x\not\in\partial e$ proved in Claim~\ref{claim2}.
\end{proof}

\begin{observation*}
If $\ell\in V$ is an extremal vertex of $V$, then $\ell$ is the minimal or maximal vertex in every agreeing linear order on each $U\subseteq V$ containing $\ell$.
\end{observation*}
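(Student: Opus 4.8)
The plan is to argue by contradiction, producing an edge through $\ell$ in which $\ell$ fails to be a boundary vertex, thereby contradicting extremality. Fix $U\subseteq V$ with $\ell\in U$ and an agreeing linear order $L$ on $U$, and suppose $\ell$ is neither the $L$-minimal nor the $L$-maximal vertex of $U$. Then there exist vertices $u,v\in U$ with $u<_L\ell<_Lv$.

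Since $H$ is an $r$-uniform clique, every $r$-element subset of $U$ is an edge of $H$ lying entirely in $U$; this is the only place where I use that $U$ has at least $r$ vertices, so that such an edge through a given triple exists (for smaller $U$ no edge meets $U$ and the assertion is vacuous). First I would extend the triple $\{u,\ell,v\}$ to an $r$-element set $e\subseteq U$ by adjoining any $r-3$ further vertices of $U\setminus\{u,\ell,v\}$, which is possible exactly because $|U|\ge r$ (and for $r=3$ the triple itself already serves as $e$).

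Now $e$ is an edge contained in $U$, so it agrees with $L$: the set $\partial e$ consists of the $L$-minimal and $L$-maximal elements of $e$. But $u<_L\ell$ shows $\ell$ is not the $L$-minimal element of $e$, and $\ell<_Lv$ shows $\ell$ is not the $L$-maximal element of $e$, hence $\ell\notin\partial e$. On the other hand, since $\ell$ is extremal in $V$, we have $\ell\in\partial e$ for \emph{every} edge $e$ of $H$ containing $\ell$, and in particular for this one. This contradiction forces $\ell$ to be the minimal or maximal vertex of $L$, as claimed.

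The argument is short, and the only delicate point is the size bookkeeping: one must have $|U|\ge r$ to guarantee an edge of $U$ passing through the triple $\{u,\ell,v\}$, with $r\ge 3$ ensuring the triple is available as the seed. I do not expect any genuine obstacle beyond recording this hypothesis on $|U|$.
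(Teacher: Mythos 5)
Your argument is correct and is exactly the short contradiction the paper leaves implicit (the Observation is stated without proof): extend $\{u,\ell,v\}$ to an edge inside $U$, note its boundary must be its $L$-extremes, and contradict $\ell\in\partial e$. Your size caveat is the right one to record, though note that for $|U|<r$ the conclusion is not merely unproved but can actually fail (every linear order on such a $U$ is vacuously agreeing), so the Observation should be read with $|U|\ge r$, which holds in all of its uses in the paper.
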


\begin{theorem}
\label{main4}
For $r\geq 4$, let $H$ be a $2$-extreme marked $r$-uniform clique with at least $2r-2$ vertices. Then $H$ has an agreeing linear order if and only if every subhypergraph of $H$ on $2r-2$ vertices has an agreeing linear order. Furthermore, the constant $2r-2$  cannot be replaced by a smaller value. 
\end{theorem}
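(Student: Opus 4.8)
The forward implication is immediate: if $L$ is an agreeing linear order of $H$, then its restriction to any $U\subseteq V(H)$ is an agreeing linear order of $U$, in particular on every set of $2r-2$ vertices. So the content is the converse, which I would prove by induction on $n=|V(H)|$. The base case $n=2r-2$ is just the hypothesis applied to $U=V(H)$. For the step, assume $n>2r-2$ and that the converse holds for all such cliques with fewer than $n$ (but at least $2r-2$) vertices. By Lemma~\ref{extremals} the clique $H$ has exactly two extremal vertices $\ell_1,\ell_2$. Set $H'=H-\ell_2$; it has $n-1\geq 2r-2$ vertices, and every $(2r-2)$-subset of $H'$ is a $(2r-2)$-subset of $H$, so the induction hypothesis yields an agreeing order $L'$ of $H'$. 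Since every edge of $H'$ through $\ell_1$ is an edge of $H$ through $\ell_1$, the vertex $\ell_1$ is still extremal in $H'$, so by the Observation it is an endpoint of $L'$; after replacing $L'$ by $(L')^d$ if necessary, assume $\ell_1=\min L'$. Let $L$ be the order on $V(H)$ obtained from $L'$ by appending $\ell_2$ as a new maximum.

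It remains to verify that $L$ is agreeing. An edge $e$ with $\ell_2\notin e$ is an edge of $H'$, hence agrees with $L'=L|_{V(H')}$ and therefore with $L$. So fix an edge $e$ with $\ell_2\in e$. As $\ell_2$ is extremal, $\ell_2\in\partial e$, and $\ell_2=\max_L e$ by construction; thus it suffices to show that the other boundary vertex of $e$ is $\min_L(e\setminus\{\ell_2\})$. Choose a $(2r-2)$-subset $U$ with $e\cup\{\ell_1\}\subseteq U$ (possible since $|e\cup\{\ell_1\}|\leq r+1\leq 2r-2$), and let $L_U$ be an agreeing order of $U$. Both $\ell_1$ and $\ell_2$ are extremal in $U$, so by the Observation each is an endpoint of $L_U$; being distinct they are its minimum and maximum, and dualizing $L_U$ if needed we may take $\ell_1=\min L_U$ and $\ell_2=\max L_U$.

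Now consider the $(2r-3)$-element set $U\setminus\{\ell_2\}\subseteq V(H')$. Both $L'$ and $L_U$ restrict to agreeing orders of the sub-clique on $U\setminus\{\ell_2\}$, and both have minimum $\ell_1$. By Lemma~\ref{unique} this agreeing order is unique up to duality; since the dual would send $\ell_1$ to the maximum, the two restrictions coincide. Hence $L'$ and $L_U$ agree on $U\setminus\{\ell_2\}\supseteq e\setminus\{\ell_2\}$, so
\[
\min_L(e\setminus\{\ell_2\})=\min_{L'}(e\setminus\{\ell_2\})=\min_{L_U}(e\setminus\{\ell_2\}).
\]
Because $e$ agrees with $L_U$ and $\ell_2=\max_{L_U}e$, the boundary vertex of $e$ opposite to $\ell_2$ equals $\min_{L_U}(e\setminus\{\ell_2\})$, which by the display is $\min_L(e\setminus\{\ell_2\})$. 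Thus $\partial e=\{\ell_2,\min_L(e\setminus\{\ell_2\})\}$, i.e., $e$ agrees with $L$, and the induction is complete.

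For the final (sharpness) claim I would exhibit, for each $r$, a $2$-extreme marked $r$-uniform clique on $2r-2$ vertices that has no agreeing order although every $(2r-3)$-subset has one; since any set of at most $2r-3$ vertices lies in such a $(2r-3)$-subset, this rules out every Helly number smaller than $2r-2$. For the non-existence I would use a symmetry obstruction: if $L$ is an agreeing order, then by Lemma~\ref{unique} every automorphism of the marked clique carries $L$ to $L$ or to $L^d$, so the resulting map from $\mathrm{Aut}(H)$ to the two-element group has trivial kernel (an automorphism fixing $L$ fixes a finite linear order, hence is the identity); consequently $|\mathrm{Aut}(H)|\leq 2$ whenever an agreeing order exists. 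I would therefore make the boundary assignment invariant under the cyclic shift of the $2r-2$ vertices, so that $|\mathrm{Aut}(H)|\geq 2r-2>2$ and no global agreeing order can exist, while the symmetry ensures that no vertex is distinguished, giving each of the $2r-2$ single-vertex deletions a chance to be linearizable. For $r=3$ this is precisely the classical cyclic betweenness on four points, where every triple is trivially linearizable but the four middle choices cannot be realized at once. The main obstacle is the general construction: one must define a cyclically symmetric boundary assignment that resolves the ambiguities (ties) occurring for $r\geq 4$, when the $(r-2)$-element complement of an edge is scattered around the circle, and then produce for each vertex deletion an explicit agreeing order on the remaining $2r-3$ vertices. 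This positive verification, rather than the non-existence, is the real difficulty of the sharpness part.
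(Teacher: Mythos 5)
Your proof of the equivalence (both directions and the induction) is correct and is essentially the paper's own argument: induction on $n$, Lemma~\ref{extremals} to locate the two extremal vertices, the Observation to pin them at the ends of the relevant agreeing orders, and Lemma~\ref{unique} on a $(2r-3)$-element set to force the restriction of the local order $L_U$ to coincide with the inductively obtained order. The only difference is cosmetic: the paper deletes one extremal vertex and prepends it as a new minimum while keeping the other extremal vertex inside the witnessing $(2r-2)$-set $U$, whereas you delete $\ell_2$ and append it as a new maximum while keeping $\ell_1$ in $U$; these are the same argument up to duality.

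The sharpness claim, however, is left with a genuine gap, which you yourself flag: you never produce the boundary assignment for $r\geq 4$, and you never verify that each of the $2r-2$ vertex-deleted subcliques admits an agreeing order --- and that verification is the entire content of the statement. The automorphism obstruction ($|\mathrm{Aut}(H)|\leq 2$ whenever an agreeing order exists, by Lemma~\ref{unique}) is a clean way to rule out a \emph{global} agreeing order for a cyclically invariant marking, but it gives no help with the positive half, and it is not even clear that a cyclically symmetric marking with the required local property exists for $r\geq 4$. The paper sidesteps symmetry entirely by perturbing a single edge: on $V(H)=\{v_1,\dots,v_{2r-2}\}$ give every edge the ``natural'' boundary $\{v_{i_1},v_{i_r}\}$ except for $e_0=\{v_1,\dots,v_r\}$, which gets $\partial e_0=\{v_1,v_{r-1}\}$. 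Setting $e_1=\{v_{r-1},\dots,v_{2r-2}\}$, the extremal vertices force any agreeing order to be $(v_1,\dots,v_{2r-2})$ up to duality, and no such order can satisfy both $e_0$ and $e_1$; since $e_0\cup e_1=V(H)$, every vertex-deleted subclique misses one of $e_0,e_1$, and one of the two explicit orders $(v_1,\dots,v_{r-1},v_r,\dots,v_{2r-2})$ or $(v_1,\dots,v_r,v_{r-1},\dots,v_{2r-2})$, restricted, does the job. You should replace your symmetric construction sketch with an explicit example of this kind (or complete yours, which would require substantially more work).
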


\begin{proof}  Necessity  in the first claim is obvious, we prove the sufficiency by induction on the order of $H=(V,E)$.
  The claim is true for $|V|=2r-2$, by the condition. Let $|V|=n+1$, $n\geq 2r-2$,
  and assume that the claim is true for $n$ vertices. We may assume, by Lemma~\ref{extremals},  that $v_0, v_n$ are the extremal vertices of $H$. By induction, there is  an agreeing linear order $L_1$ for $H-v_0$. Due to the Observation, up to duality, we may assume that $L_1=(v_1, \ldots,v_n)$.
We claim that $L_0=(v_0,v_1, \ldots,v_n)$ is an agreeing linear order for $H$.

Let $e$ be an arbitrary edge of $H$. If $v_0\not\in e$, then $e$ agrees with $L_1$ so $e$ agrees with $L_0$. For the balance of the argument, assume $v_0\in e$, specifically, let
$e=\{v_0,v_{i_1},v_{i_2}, \ldots, v_{i_{r-1}}\}$ with $1\leq{i_1}< \ldots <i_{r-1}\leq n$.
Consider a $(2r-2)$-element set $U$ containing $e$ and including $v_n$. 
By the condition of the theorem,
$U$ has an agreeing linear order $L_0'$. By the Observation, $L_0'$ has $v_0$ and $v_n$ as its minimal and maximal elements. Let $U'=U\setminus\{v_0\}$. Since $|U'|=2r-3$, by Lemma~\ref{unique}, $L_0'|_{U'}=L_1|_{U'}$ or $L_0'|_{U'}=(L_1|_{U'})^d$. Hence, up to duality, in $L_0'$ we have
\[
v_0<v_{i_1}<\cdots<v_{i_{r-1}}<v_n,
\]
therefore $\partial  e=\{v_0,v_{i_{r-1}}\}$.
 
To see the second claim of the theorem we present a hypergraph $H$ on  $n=2r-2$ vertices, for every $r\geq 4$, such that
the vertex set of each subhypergraph of $H$ with $2r-3$ has an agreeing linear order but $V(H)$ does not.
Let 
 $V(H)=\{v_1,v_2,\ldots,v_{2r-2}\}$; for $e_0=\{v_1,\ldots,v_{r-1},v_r\}$ define $\partial  e_0=\{v_{1},v_{r-1}\}$, and for every   $e=\{v_{i_1},v_{i_2},\dots,v_{i_r}\}$, $1\leq i_1<i_2<\ldots i_r\leq 2r-2$,  different from $e_0$ define $\partial  e=\{v_{i_1},v_{i_r}\}$. 
 Let  $e_1=\{v_{r-1},v_r,\ldots,v_{2r-2}\}$. 
Because the extremal vertices of $H$ are $v_1$ and $v_{2r-2}$, it follows that no agreeing linear order $L=(v_1,\ldots,v_{2r-2})$ agrees with both $e_0$ and $e_1$.  

Observe that $L_1=(v_1,\ldots,v_{r-1},v_r,\ldots,v_{2r-2})$ agrees with all edges of $H$ but $e_0$, and 
$L_0=(v_1,\ldots,v_{r},v_{r-1},\ldots,v_{2r-2})$  agrees with all edges of $H$ but $e_1$.
 Because  $e_0\cup e_1=V(H)$, no subhypergraph $H-v_i$ contains both $e_0$ and $e_1$; therefore, either $L_0-v_i$ or  $L_1-v_i$ is an agreeing linear order of $V(H)\setminus\{v_i\}$, for every $1\leq i\leq 2r-2$. 
 \end{proof}
 
 \subsection{Betweenness structure}
 \label{between}
 
 Theorem~\ref{main3} on $2$-extreme marked $3$-uniform cliques has particular interest in the axiomatic theory of betweenness (see \cite{AzNa,Fish,Hunt,HuntK}). The edges of a  $2$-extreme marked  $3$-uniform complete hypergraph describe a `betweenness structure' 
investigated first by Huntington and Klein \cite{HuntK}. They write:
{\it The `universe of discourse' of the present paper is the class of all well-defined
systems $(K, R)$ where $K$ is any class of elements $A, B, C, \ldots$
and $R$ is any triadic relation. The notation $R[ABC]$, or simply $ABC$,
indicates that three given elements $A, B, C$, in the order stated, satisfy the
relation $R$.}

The four basic postulates of `betweenness' due to Huntington and Klein \cite{HuntK} can be interpreted in a natural way in terms of $2$-extreme marked $3$-uniform cliques. Let $H=(K,E)$ be a $3$-uniform complete hypergraph with vertex set $K$, and edge set $E$, where each edge
has two  boundary vertices and the third vertex is its `middle' vertex between the boundary vertices. Every $e=\{A,B,C\}\in E$ with two boundary vertices can be specified  as a sequence $(A,B,C)$ or $(C,B,A)$, where $B$ is the middle vertex of $e$.

\begin{itemize}
\item{Postulate A:} 
if $ABC$ is true, then $CBA$ is true 

-- for  $\{A,B,C\}\in E$ any of the  notations $(A,B,C)$ or $(C,B,A)$ reads as {\bf `$B$ is between $A$ and $C$'}.

\item{Postulate B:} if $A, B, C$ are distinct, then at least one of the six possible
permutations will form a true triad 

-- $H=(K,E)$ is a complete hypergraph.

\item{Postulate C:} we cannot have $AXY$ and $AYX$ both true at the same time 

-- $H=(K,E)$ is a simple hypergraph and each edge has two fixed boundary vertices, that is $2$-extreme marked.

\item{Postulate D:} if $ABC$ is true, then the elements $A, B$, and $C$ are distinct 

--  $H=(K,E)$ is a $3$-uniform hypergraph.
\end{itemize}

The less general model of betweenness is the betweenness relation naturally defined by a linear order.
By adding further axioms to Postulates A, B, C,  D, Huntington \cite{Hunt} and Huntington and Kline \cite{HuntK} identify
several sets of independent axioms to characterize  betweenness  structures admitting `agreeing linear order'.
They observe (see also Fishburn \cite{Fish}) that no axioms 
concerning more than four elements need be assumed as fundamental.

This observation takes a precise form here due to Theorem~\ref{main3} (proved in Section \ref{proof3}):
a betweenness structure admits an agreeing linear order if and only if its every substructure on $4$ elements admits an agreeing linear order. In the light of this theorem all systems of axioms listed by Huntington \cite{Hunt} are various logical constructions that exclude the obstructing  $4$-point betweenness structures. 

In the study of betweenness structures involving triples of convex sets in the plane \cite{BLT} we introduced a different kind of `agreeing' linear order, where the `middle element' of each triple precedes the two `boundary elements'. This concept of betweenness  is extended in the next sections from triples to $r$-tuples ($r\geq 3$), 
 where  one  or two elements are marked in each; and 
 our investigations concern linear orders of the elements agreeing with various rules of agreement.
 
\section{Min-marked hypergraphs}
\label{minmarked}

{Let $H$ be an $r$-uniform hypergraph (not necessarily a clique), and let $A(e)\in e$ be the vertex marked in each $e\in E(H)$. 
This hypergraph will be called a {\it min-marked hypergraph} 
 indicating that $A(e)$ should become the minimum among the vertices of each $e$ in an agreeing linear order.} 
A linear order $L$  of $H$ is called 
an {\em agreeing linear order}, provided  
$A(e)<_L v$, for every $e\in E(H)$ and $v\in e\setminus\{A(e)\}$.

First we will characterize general hypergraphs admitting an agreeing linear order, then we discuss the Helly-property of complete hypergraphs.

Let $M(H)$ denote the incidence matrix of $H$, that is, rows correspond to the edges, columns correspond to the vertices,
and for any $e\in E(H)$ and $\alpha\in V(H)$

$$m(e,\alpha)=\left\{
\begin{array}{rl}
0 &\hbox{if $\alpha\notin e$}\\
-1 &\hbox{if $\alpha=A(e)$}\\
1 &\hbox{if $\alpha\in e\setminus \{A(e)\}$}
\end{array}
\right.
$$

The matrix $F=
\begin{tabular}{|r|r|}\hline
   -1 & 1   \\\hline
   1 & -1\\\hline
\end{tabular}$ and its permutation will be called a {\it forbidden $2\times 2$}. 
If  $L$
 is an agreeing  linear order of the min-marked hypergraph $H$, then the incidence matrix $M(H)$ contains no submatrix equivalent to a forbidden  $2\times 2$.  
 
 The submatrix 
  $$
\begin{tabular}{c||r|r|r}
&$\alpha$&$\beta$\\\hline\hline
 $e$&  -1 & 1   \\\hline
 $f$ &  0 & -1\\\hline
\end{tabular}$$ and its permutations are called {\it precedence $2\times 2$} matrices.
If   $M(H)$ has a precedence submatrix as above and $L$
 is an agreeing  linear order, then the first row implies the `precedence' 
 $\alpha<_L \beta$; 
 
 We show 
 a necessary and sufficient condition for the existence of an agreeing
 linear order of a min-marked hypergraph. For this purpose we introduce an auxiliary directed graph
 $D(H)$, called the {\it $D$-graph of $H$}, containing `precedence information' about $H$ as follows. 
 Consider the partition $\Pi$ of $E(H)$ into classes where $f_1, f_2$ are equivalent if and only
 if $A(f_1)=A(f_2)$.
 Let  the vertices of $D(H)$ be the equivalence classes of $\Pi$, and for distinct $F_1,F_2\in\Pi$ let $F_1\rightarrow F_2$
 be a directed edge  of $D(H)$ 
 if $e\in F_1,f\in F_2$ such that $A(f)\in e$, {that is the $\{e,f\}\times\{A(e),A(f)\}$ submatrix of $M(H)$ is either a forbidden or a precedence matrix}.
 
\begin{proposition}
\label{digraph}
A min-marked hypergraph $H$ has an agreeing linear order if and only if the $D$-graph of $H$ contains no directed cycle.
\end{proposition}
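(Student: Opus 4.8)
The plan is to establish both directions using the fact that an agreeing linear order is precisely an ordering that respects every `precedence' relation forced by the matrix $M(H)$, and that these precedences are exactly the information encoded in $D(H)$. The key observation is that the marked vertices $A(e)$ naturally group into the equivalence classes of $\Pi$, and the ordering problem reduces to ordering these classes consistently.

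For the easy direction (agreeing order $\Rightarrow$ acyclic $D$-graph), I would argue by contrapositive. Suppose $D(H)$ contains a directed cycle $F_1 \rightarrow F_2 \rightarrow \cdots \rightarrow F_k \rightarrow F_1$. Each directed edge $F_i \rightarrow F_{i+1}$ arises from edges $e \in F_i$, $f \in F_{i+1}$ with $A(f) \in e$, and since $A(e) \neq A(f)$ (distinct classes), the minimality of $A(e)$ in $e$ forces $A(e) <_L A(f)$ in any agreeing order $L$. Writing $a_i$ for the common value $A(\cdot)$ on class $F_i$, the cycle yields $a_1 <_L a_2 <_L \cdots <_L a_k <_L a_1$, an impossibility. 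Hence no agreeing order can exist when $D(H)$ has a cycle.

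For the substantive direction (acyclic $D$-graph $\Rightarrow$ agreeing order exists), I would use that a directed acyclic graph admits a topological ordering of its vertices. I take such a topological order of the classes in $\Pi$ and use it to define a linear order $L$ on $V(H)$: roughly, order the distinguished vertices $a_i = A(F_i)$ according to the topological order, and then insert the remaining (non-marked) vertices consistently. The delicate point is to verify that this $L$ is genuinely agreeing, i.e.\ that for every edge $e$ we really have $A(e) <_L v$ for all $v \in e \setminus \{A(e)\}$. I would check this case by case: if $v = A(f)$ is itself a marked vertex of another class, then $A(e) \in$ ... wait, rather $v \in e$ means the submatrix on $\{e,f\}\times\{A(e),A(f)\}$ is a precedence or forbidden matrix, giving the edge $F(e) \rightarrow F(f)$ in $D(H)$, so the topological order places $A(e)$ before $A(f)=v$; if $v$ is not a marked vertex of any edge, I place it arbitrarily after all vertices that are forced to precede it, which the acyclicity guarantees is consistent.

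The main obstacle I anticipate is the construction of $L$ from the topological order when a single vertex $v$ plays the role of $A(e)$ for some edges but is a non-marked member of other edges: the relative placement of the marked representatives versus the `interior' vertices must be shown to be simultaneously satisfiable, and this is exactly where acyclicity of $D(H)$ must be invoked to rule out a cyclic chain of forced precedences. I would handle this by defining a single auxiliary partial order on all of $V(H)$ generated by all precedence constraints, show its Hasse diagram inherits acyclicity from $D(H)$ (since every precedence constraint between two vertices with distinct marks corresponds to an arc of $D(H)$, and constraints among vertices sharing a mark are automatically consistent), and then take any linear extension. A linear extension of an acyclic relation always exists, and by construction it satisfies every edge's minimality requirement, completing the proof.
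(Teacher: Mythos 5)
Your proposal is correct and follows essentially the same route as the paper: both directions hinge on the observation that an arc $F(e)\rightarrow F(f)$ of $D(H)$ is exactly a forced precedence $A(e)<_L A(f)$, the necessity is the resulting cyclic chain of inequalities, and the sufficiency takes a topological order of the classes, lists the marked vertices accordingly with the never-marked vertices placed afterwards, and verifies agreement by noting that any violated constraint $\beta <_L \alpha=A(e)$ with $\beta\in e$ would force $\beta=A(f)$ for some $f$ and hence an arc $F(e)\rightarrow F(f)$ contradicting the topological order. Your extra care about vertices that are marked in some edges but unmarked members of others is subsumed by the same argument, so no further work is needed.
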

\begin{proof} The condition is obviously necessary. Assume now that $D=D(H)$ has no directed cycle. Then 
  the vertices of $D$ have a topological order $L$,
  that is, an  ordering such that all edges go `forward' (from smaller to larger).
  Use this ordering of the equivalence classes of $\Pi$ to define a linear order of $V(H)$ by listing the marked vertices common to the edges in each  equivalence class according to the forward ordering, then all vertices of $H$ marked for no edge follow in arbitrary order.
  
  Suppose that it is not an agreeing linear order. Then there is an $e\in E(H)$, $\alpha, \beta\in e$, $\beta<_L\alpha$ and $\alpha=A(e)$.
  By the definition of $L$, there is an $f\in E(H)$ with $\beta=A(f)$. Suppose that $e\in F_1$, $f\in F_2$. 
  Then $A(f)\in e$, therefore, there is a directed edge from $F_1$ to $F_2$, contradicting $\beta<_L\alpha$. Therefore, $L$ is 
  an agreeing linear order of $H$.
\end{proof}

 \subsection{Min-marked cliques}
\begin{lemma} 
\label{r+1}
Assume that  $H$ is a min-marked $r$-uniform  clique with at least $r+1$ vertices  ($r\geq 3$). 
If $M(H)$ has a forbidden $2\times 2$, then there exist $e,f\in E(H)$ and $\alpha,\beta\in V(H)$ such that
$|e\cup f|=r+1$ and $\{e,f\}\times \{\alpha,\beta\}$ is a forbidden $2\times 2$.
\end{lemma}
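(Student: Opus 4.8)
The plan is to prove the statement by induction on $k=|e\setminus f|=|f\setminus e|$, where $e,f$ are the two edges realizing a forbidden $2\times 2$. First I would unwind the definition: a forbidden $2\times 2$ on rows $e,f$ and columns $\alpha,\beta$ means exactly that $\alpha=A(e)$, $\beta=A(f)$, $\alpha\neq\beta$, and both $\alpha,\beta\in e\cap f$. Since $H$ is a clique and $A(e)\neq A(f)$, the rows come from distinct $r$-subsets, so $e\neq f$ and $|e\cup f|=r+k$ with $k\geq 1$. When $k=1$ we already have $|e\cup f|=r+1$, which gives the base case with nothing further to prove.

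For the inductive step I would assume $k\geq 2$, pick any $p\in e\setminus f$ and $q\in f\setminus e$, and form the \emph{intermediate} edge $g=(e\setminus\{p\})\cup\{q\}$, which is again an edge because $H$ is a clique. This $g$ is designed so that it differs from $e$ in a single vertex, hence $|e\cup g|=r+1$, while it is one vertex closer to $f$, hence $|g\setminus f|=k-1$. Since $\alpha,\beta\in f$ we have $\alpha,\beta\neq p$, so both survive into $g$ and remain available as columns. The argument then splits according to which vertex $g$ marks.

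The case analysis on $A(g)$ is the heart of the proof. If $A(g)$ is a vertex $\gamma\in e$ with $\gamma\neq\alpha$, then rows $e,g$ and columns $\alpha,\gamma$ form a forbidden $2\times 2$ (indeed $A(e)=\alpha\in g$, $A(g)=\gamma\in e$, both lying in $e\cap g$), and since $|e\cup g|=r+1$ we finish at once. Otherwise $A(g)$ is either $\alpha$ itself or the swapped-in vertex $q$; in both of these cases rows $g,f$ form a forbidden $2\times 2$, with columns $\{\alpha,\beta\}$ when $A(g)=\alpha$ and with columns $\{q,\beta\}$ when $A(g)=q$ (note $q\neq\beta$ since $q\notin e\ni\beta$). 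In these two cases the new witnessing pair $(g,f)$ satisfies $|g\setminus f|=k-1$, so the induction hypothesis applies to $(g,f)$ and delivers a forbidden $2\times 2$ of union $r+1$.

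I expect the only delicate point to be checking that the `bad' case $A(g)=q$ still makes progress instead of derailing the induction: here the marked vertex of $g$ is exactly the newly inserted one, so $g$ cannot be paired with $e$ along $\alpha,\beta$, but pairing $g$ with $f$ along the columns $\{q,\beta\}$ restores the anti-diagonal pattern while shrinking the symmetric difference with $f$. Once this is in place the induction closes cleanly, since every step either terminates with union exactly $r+1$ or strictly decreases $k$ while preserving a forbidden $2\times 2$, and $k$ never drops below $1$; this same descent could equivalently be phrased by taking a forbidden $2\times 2$ minimizing $|e\cup f|$ and ruling out $|e\cup f|>r+1$.
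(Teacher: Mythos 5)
Your proof is correct and uses essentially the same mechanism as the paper's: swap a single vertex to create an intermediate edge at distance one from a known edge, then case on where the new edge's mark lands, either terminating with a forbidden $2\times 2$ of union $r+1$ or passing the pattern to a pair with strictly smaller symmetric difference. The only cosmetic differences are that you deform $e$ toward $f$ and phrase the descent as induction on $|e\setminus f|$, whereas the paper deforms $f$ toward $e$ and argues by minimality of $|e\cup f|$ with a contradiction.
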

\begin{proof}
Let $e,f\in E(H)$ be distinct edges and $\alpha,\beta\in V(H)$ such that
$\{e,f\}\times \{\alpha,\beta\}$ is a forbidden $2\times 2$; furthermore, 
let $k=|e\cup f|$ is as small as possible. We show that $k<r+2$ as stated.
Assume that $k>r+1$.

\begin{table}[h!]
\centering
\begin{tabular}{c||c|c|c|c|c|r|r|l|c|c|c|c|c|c|}
 H & &&&&$\alpha$&$\beta$&&\ldots&&$\zeta$&$\beta^\prime$&&& \\\hline\hline
$e$& 1&\ldots&1&1&  -1 & 1  &  0  &\ldots&0&0&1&\ldots&1&1\\\hline
$f$ & 1&\ldots&1&1&  1 & -1&1&\ldots&1&1&0&\ldots&0&0\\\hline
 $f^\prime$   & 1&\ldots&1&1& 1&$\star$&1&\ldots&1&0&$\star$&\ldots&0&0 \\
\end{tabular}
\vskip1em
\caption{}
\label{long}
\end{table}
Table~\ref{long} shows $e$ and $f$ by listing first the elements in $e\cap f$ ending with $\alpha$ and $\beta$,
then the elements in $f\setminus e$ followed by the elements of $e\setminus f$.
Let $f^\prime=(f\setminus\{\zeta\})\cup\{\beta^\prime\}$, where $\zeta\in f\setminus e$ and $\beta^\prime\in e\setminus f$ are arbitrary vertices. Observe that $|f\cup f^\prime|=r+1<k$ implies that $f\cup f^\prime$ contains no forbidden $2\times 2$; in particular, $A(f^\prime)\in\{\beta,\beta^\prime\}$ (see Table ~\ref{long}). In each case we obtain a forbidden $2\times 2$ in $e\cup f^\prime$, which contradicts the minimality of $k$, 
because $|e\cup f^\prime|=|(e\cup f)\setminus\{\zeta\}|=k-1$. 
\end{proof}

\begin{lemma} 
\label{forbidden}
A min-marked $r$-uniform clique $H$ with $r+1$ vertices  ($r\geq 3$) has an agreeing linear order if and only if 
$M(H)$ contains no forbidden $2\times 2$.
\end{lemma}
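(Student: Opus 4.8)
The plan is to prove Lemma~\ref{forbidden} by establishing the nontrivial direction: if $M(H)$ contains no forbidden $2\times 2$, then $H$ has an agreeing linear order. Since $H$ is a min-marked $r$-uniform clique on exactly $r+1$ vertices, every edge is obtained by deleting a single vertex from $V(H)$, so the edges correspond bijectively to the $r+1$ vertices via the deletion. I will work directly with the $D$-graph machinery from Proposition~\ref{digraph}: it suffices to show that the $D$-graph $D(H)$ contains no directed cycle. The absence of a forbidden $2\times 2$ is the hypothesis I must convert into acyclicity of $D(H)$.

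First I would unpack what a forbidden $2\times 2$ means on $r+1$ vertices. A forbidden submatrix on rows $e,f$ and columns $\alpha,\beta$ says $A(e)=\alpha$, $\beta\in e$, $A(f)=\beta$, and $\alpha\in f$; that is, each of the two edges is marked at a vertex lying in the other edge. Its absence means precisely the following \emph{consistency} condition: whenever $e,f\in E(H)$ with $A(f)\in e$ and $A(e)\in f$, we cannot have $A(e)\neq A(f)$ with the mutual-containment pattern --- so there is no pair of edges whose marked vertices point `into' each other. I would then reformulate this as the statement that the relation induced on marked vertices by `$A(f)\in e$' is free of $2$-cycles in a suitable sense. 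The key observation is that on $r+1$ vertices, for two distinct edges $e,f$, the edge $e$ omits exactly one vertex and $f$ omits exactly one (different) vertex, so $A(f)\in e$ holds iff $A(f)$ is not the vertex omitted by $e$, which gives a very concrete combinatorial handle.

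The heart of the argument is to show that no forbidden $2\times 2$ implies $D(H)$ is acyclic, and for this I would prove the stronger and cleaner claim that $D(H)$ cannot even contain a directed $2$-cycle, and then bootstrap to rule out longer cycles. A directed $2$-cycle $F_1\to F_2\to F_1$ would supply edges $e\in F_1$, $f\in F_2$ with $A(f)\in e$ together with edges realizing the reverse arc giving $A(e)\in f$; using that the marked vertices are constant on each class of $\Pi$, this reconstitutes exactly a forbidden $2\times 2$ on the two marked columns $A(e),A(f)$, contradicting the hypothesis. To eliminate longer directed cycles $F_1\to F_2\to\cdots\to F_k\to F_1$, I would translate each arc $F_i\to F_{i+1}$ into the precedence statement $A(F_i)<A(F_{i+1})$ that any agreeing order would have to satisfy, and use the clique structure on $r+1$ vertices to produce a `shortcut' arc or a forbidden pattern among three of the classes, contradicting minimality of a shortest cycle. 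The main obstacle I anticipate is precisely this last step: passing from the purely local no-forbidden-$2\times 2$ condition, which only controls pairs of edges, to a \emph{global} acyclicity statement. I expect the clique assumption on exactly $r+1$ vertices to be what makes the shortcut possible, since any three marked vertices $A(F_i),A(F_j),A(F_k)$ together omit at most three vertices and hence lie in a common edge, allowing the chain of precedences to collapse; verifying that this collapse always yields either an explicit forbidden $2\times 2$ or a shorter cycle is the delicate part of the proof.
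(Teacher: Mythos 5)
Your route is genuinely different from the paper's. The paper proves the nontrivial direction by a direct analysis of the incidence matrix: it first shows (by a case analysis on the first few rows of $M(H)$, using that on $r+1$ vertices each edge omits exactly one vertex) that all $-1$ entries lie in at most two columns $\alpha,\beta$, and then observes that one of the orders $(\alpha,\beta,\ldots)$, $(\beta,\alpha,\ldots)$ must agree. You instead route everything through Proposition~\ref{digraph} and prove acyclicity of $D(H)$; this is more conceptual and reuses machinery already in the paper, whereas the paper's argument is self-contained and, as a by-product, exhibits the agreeing order explicitly (all marks sit at two vertices). Your $2$-cycle step is correct as stated: since $A$ is constant on each class of $\Pi$, arcs $F_1\to F_2\to F_1$ yield edges $e,e'$ with $A(e)=A(F_1)\in e'$, $A(e')=A(F_2)\in e$, and $\{e,e'\}\times\{A(F_1),A(F_2)\}$ is forbidden.

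The step you flag as delicate is where the proposal stops short, and the generic ``shortcut or forbidden pattern'' claim does not finish it by itself: a chordless directed $3$-cycle among classes does \emph{not} produce a forbidden $2\times 2$ among its three witnessing edges alone. Here is how to close it. Let $F_1\to\cdots\to F_k\to F_1$ be a shortest cycle, $k\geq 3$, with $\alpha_i=A(F_i)$ and witnessing edges $e_i\in F_i$, $\alpha_{i+1}\in e_i$. If $e_i\ni\alpha_j$ for some $j\notin\{i,i+1\}$, the arc $F_i\to F_j$ shortcuts the cycle to one of length between $2$ and $k-1$, which is either a $2$-cycle (forbidden $2\times 2$) or contradicts minimality; so $e_i$ omits every $\alpha_j$ with $j\notin\{i,i+1\}$. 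Since each edge omits exactly one vertex, $k-2\leq 1$, forcing $k=3$ and $e_i=V(H)\setminus\{\alpha_{i+2}\}$ (indices mod $3$). These three edges are mutually consistent, so you must invoke a fourth edge: as $|V(H)|=r+1\geq 4$, pick $v\notin\{\alpha_1,\alpha_2,\alpha_3\}$ and let $g=V(H)\setminus\{v\}$, which contains all three $\alpha_i$. Pairing $g$ with $e_1$ on columns $\{\alpha_1,A(g)\}$ forces $A(g)\in\{\alpha_1,\alpha_3\}$ on pain of a forbidden $2\times 2$; pairing with $e_2$ forces $A(g)\in\{\alpha_1,\alpha_2\}$, and with $e_3$ forces $A(g)\in\{\alpha_2,\alpha_3\}$. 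The three constraints have empty intersection, a contradiction. Note this is exactly where the hypothesis $r\geq 3$ enters your proof (for $r=2$, i.e.\ three vertices, the $3$-cycle configuration is realizable). With this supplied, your argument is complete and correct.
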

\begin{proof} 
If $H$ has an agreeing linear order then $M(H)$ contains no forbidden $2\times 2$. Assume now that $M(H)$ has no forbidden $2\times 2$.\\

\begin{claim*}
The $-1$ entries of $M(H)$ are in two columns.
\end{claim*}
\begin{proof}{
By permuting rows and columns, we may assume that $M(H)$ has the following form:
\begin{itemize}
\item[--] the first row is $e_1=\begin{bmatrix} 0 & -1 & 1 & \ldots & 1\end{bmatrix}$;
\item[--] the second row is $e_2=\begin{bmatrix} -1 & 0 & 1 & \ldots & 1\end{bmatrix}$ or $e_2=\begin{bmatrix} 1 & 0 & -1 & \ldots & 1\end{bmatrix}$;
\item[--] each subsequent row $e_i$ has its $0$ entry at the $i$th position. 
\end{itemize}
}

\begin{table}[h!]
\centering
\begin{tabular}{c||r|r|r|r|r|r}
H  & $\alpha$&$\beta$&$\gamma$&\ldots&$\zeta$ &\ldots \\\hline\hline
$e_1$ &  0 & -1  &  1  &\ldots&1\\\hline
$e_2$ & -1 & 0& 1&\ldots&\\\hline
\\\hline
 $f$   & 1&1&&\ldots&-1&
\end{tabular}\hskip1.5cm
\begin{tabular}{c||c|r|r|l|c|c|c}
 H & $\alpha$&$\beta$&$\gamma$&$\delta$&\ldots&$\zeta$& \ldots \\\hline\hline
$e_1$ &  0 & -1  &  1  &1&\ldots&1\\\hline
$e_2$ &  1 & 0&-1&1&\ldots&1\\\hline
 $e_3$   & (-1)&1&0&1&\ldots&(-1)\\\hline
 $e_4$ &  $\star$ & $\star$&$\star$&0&\ldots&1
\end{tabular}
\end{table}
To prove the claim, assume to the contrary that the $-1$ entries are spread in three (or more) columns.  In the table on the left for the first case, let  $\zeta$ be the column different from the first two columns, $\alpha$ and $\beta$, containing a $-1$, say $m(f,\zeta)=-1$. Then $\{e_1,f\}\times \{\beta,\zeta\}$ is a forbidden $2\times 2$. 

For the second case let $e_3$ be the third row in the standard right table. Then either  $m(e_3,\zeta)=-1$, for some $\zeta$ different from $\alpha,\beta$ and $\gamma$, and  we are done as before ($e_3$ taking the role of $f$); or $m(e_3,\alpha)=-1$, as  $\alpha$ being the third column containing $-1$.

In this latter case consider the next row $e_4$ and column $\delta$ of the standard $M(H)$ (they exist, since $r+1\geq 4$, furthermore, $m(e_4,\delta)=0$).
If $m(e_4,\alpha)=-1$ then $\{e_2,e_4\}\times \{\alpha,\gamma\}$ is a forbidden $2\times 2$; if $m(e_4,\beta)=-1$ or $m(e_4,\gamma)=-1$ then $\{e_3,e_4\}\times \{\alpha,\beta\}$ or $\{e_1,e_4\}\times \{\beta,\gamma\}$ yields a forbidden $2\times 2$, respectively. 
\end{proof}

Due to the Claim we may assume that all $-1$ entries of $M(H)$ are in columns $\alpha$ and $\beta$. 
If $(\alpha,\beta,\ldots)$ is not an agreeing linear order of $H$, then there is an edge $e\in E(H)$ such that $m(e,\alpha)=1, m(e,\beta)=-1$.
Then $m(f,\beta)\neq 1$ for every $f\in E(H)$, since $m(f,\beta)= 1$ implies that $f\neq e$ and $m(f,\alpha)=-1$, thus  $\{e,f\}\times \{\alpha,\beta\}$ would be a forbidden $2\times 2$. Thus if $(\alpha,\beta,\ldots)$ is not an agreeing linear order of $H$, then  $(\beta,\alpha,\ldots)$ is so.
\end{proof}
We conclude the section with the proof of Theorem~\ref{leftentry} stated in a more technical form.
\begin{theorem}
\label{char}
For a min-marked $r$-uniform  clique $H$ with at least $r+1$ vertices ($r\geq 3$) the following statements are equivalent
\begin{itemize}
\item[(i)] $M(H)$ contains no forbidden $2\times 2$,
\item[(ii)] There is an agreeing linear order on every $(r+1)$-element subset of $V(H)$.
\item[(iii)] $H$  has an agreeing linear order,

\end{itemize}
\end{theorem}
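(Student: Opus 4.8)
## Proof Plan for Theorem~\ref{char}

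The plan is to prove the three statements equivalent by establishing the cycle of implications $(i) \Rightarrow (ii) \Rightarrow (iii) \Rightarrow (i)$, leaning on the two lemmas already proved in this subsection (Lemma~\ref{r+1} and Lemma~\ref{forbidden}) together with the $D$-graph characterization from Proposition~\ref{digraph}.

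First I would dispatch the two easy directions. The implication $(iii) \Rightarrow (i)$ is immediate: if $H$ has an agreeing linear order $L$, then as noted in the discussion preceding Lemma~\ref{r+1}, the matrix $M(H)$ can contain no forbidden $2\times 2$, since such a submatrix would force the contradictory demands $\alpha <_L \beta$ and $\beta <_L \alpha$. For $(ii) \Rightarrow (iii)$, I would argue contrapositively and invoke Proposition~\ref{digraph}: if $H$ has no agreeing linear order, then $D(H)$ contains a directed cycle, which corresponds to a cyclic chain of precedence/forbidden relations among the marked vertices; I would then localize this obstruction to a small vertex set. The cleanest route, however, is to route everything through the forbidden $2\times 2$ condition, so I would instead prove $(i) \Rightarrow (ii)$ and $(ii) \Rightarrow (i)$ directly and combine with the easy $(iii) \Rightarrow (i)$.

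The heart of the argument is $(i) \Rightarrow (iii)$, and this is where Lemma~\ref{r+1} does the decisive work. Suppose $M(H)$ contains no forbidden $2\times 2$. I want to build an agreeing linear order on all of $V(H)$. The natural approach is to use the $D$-graph of Proposition~\ref{digraph} and show it is acyclic. The key observation is that any directed edge $F_1 \to F_2$ in $D(H)$ arises from a precedence or forbidden submatrix on two edges $e, f$; if the $D$-graph had a directed cycle, I would attempt to extract from it a single forbidden $2\times 2$. Here is where the \emph{contrapositive} of Lemma~\ref{r+1} is essential: it guarantees that \emph{any} forbidden $2\times 2$ in $M(H)$ can be witnessed on an edge pair $e, f$ with $|e \cup f| = r+1$, so obstructions never require more than $r+1$ vertices to detect. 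The main obstacle is the bookkeeping in the cycle-extraction step: I expect the cleanest formulation is to show that if $D(H)$ has a cycle, then chasing the marked vertices around it produces two marked columns $\alpha, \beta$ each of which is forced both below and above the other, yielding the forbidden pattern. Alternatively, one can argue that $(i)$ implies the $-1$ entries of $M(H)$ lie in at most two columns (globally, extending the Claim in the proof of Lemma~\ref{forbidden} from $r+1$ vertices to all of $V(H)$ via Lemma~\ref{r+1}), after which the ordering $(\alpha, \beta, \ldots)$ or $(\beta, \alpha, \ldots)$ works exactly as in Lemma~\ref{forbidden}.

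Finally, for $(ii) \Rightarrow (i)$ I would again use Lemma~\ref{r+1} contrapositively: if $M(H)$ \emph{did} contain a forbidden $2\times 2$, then by Lemma~\ref{r+1} it contains one witnessed on a pair of edges spanning only $r+1$ vertices; restricting to that $(r+1)$-element subset $U$, the induced sub-clique would contain a forbidden $2\times 2$, so by Lemma~\ref{forbidden} it would have no agreeing linear order, contradicting $(ii)$. This last direction is routine once Lemma~\ref{r+1} is in hand, and it is precisely the locality statement of Lemma~\ref{r+1} that makes the Helly-type reduction to $(r+1)$-element subsets possible. I therefore expect the locality lemma to be carrying the entire weight of the theorem, with the remaining steps being short matrix-pattern arguments.
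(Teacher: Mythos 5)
Your treatment of (iii) $\Rightarrow$ (i) and of the equivalence (i) $\Leftrightarrow$ (ii) matches the paper: Lemma~\ref{r+1} localizes any forbidden $2\times 2$ to an edge pair spanning $r+1$ vertices, and Lemma~\ref{forbidden} converts that into the $(r+1)$-subset condition. The gap is that the implication carrying all the difficulty, (i) $\Rightarrow$ (iii), is never actually established. Your first route (show $D(H)$ is acyclic by extracting a forbidden $2\times 2$ from a directed cycle) stalls at exactly the step you flag as ``the main obstacle,'' and that step is not mere bookkeeping: a directed $2$-cycle in $D(H)$ does yield a forbidden $2\times 2$ immediately, but for a shortest cycle of length $k\geq 3$ with marked vertices $\alpha_1,\dots,\alpha_k$, the edge $g\supseteq\{\alpha_1,\alpha_2,\alpha_3\}$ one would naturally examine may have $A(g)=\gamma\notin\{\alpha_1,\dots,\alpha_k\}$, in which case $g$ only produces arcs \emph{out of} the class of $\gamma$ and forces neither a shortcut of the cycle nor a forbidden pattern; closing that case needs a genuine additional argument that you have not supplied.

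Your fallback route is based on a false claim: it is not true that (i) forces all $-1$ entries of $M(H)$ into at most two columns once $n>r+1$. Take $r=3$, $V(H)=\{1,\dots,5\}$, and mark every triple at its minimum in the natural order; this clique has the agreeing order $(1,2,3,4,5)$, hence no forbidden $2\times 2$, yet the edges $\{1,2,3\}$, $\{2,3,4\}$, $\{3,4,5\}$ place $-1$'s in three distinct columns. The two-column Claim inside Lemma~\ref{forbidden} genuinely uses $n=r+1$ (every edge omits exactly one vertex) and does not globalize via Lemma~\ref{r+1}. The paper's actual proof of (i)$\,\wedge\,$(ii) $\Rightarrow$ (iii) is an induction on $n$: it shows $M(H)$ has a column containing no $+1$ entry, i.e.\ a vertex that is the marked vertex of every edge containing it, which can then be placed first and deleted. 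The substance lies in showing that such a column persists when a vertex $\chi$ is returned to $H-\chi$ (the Claim that $A(f)\in\{\chi,\lambda\}$ for every edge $f$ containing both, followed by a case analysis ruling out that both $\chi$ and $\lambda$ acquire a $+1$). Nothing in your proposal substitutes for that step, so Lemma~\ref{r+1} is not, as you suggest, carrying the entire weight of the theorem.
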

\begin{proof}
Statement  (iii) obviously implies (i) and (ii), furthermore, (i) and (ii) are equivalent due to Lemma~\ref{forbidden}.
{To prove  the theorem we assume that (i) and (ii) are true (we know that they are equivalent), that is,  $M(H)$ contains no forbidden $2\times 2$ and   there is an agreeing linear order  on every $(r+1)$-element subset of $V(H)$. 
We need to show that $H$ has an agreeing linear order.}
 
This claim is verified by induction on $n$ proving that $M(H)$ has a  column 
that contains no entry equal to $1$, that corresponds to a minimal vertex. This is  true for $n=r+1$, since then $H$ has  an agreeing linear order.  Assume $n>r+1$, let $H^\prime=H-\chi$, where $\chi\in V(H)$ is arbitrary,  furthermore, assume that there is a column $\lambda$ in $M(H^\prime)$ not containing $1$.\\

\begin{claim}
$A(f)\in\{\chi,\lambda\}$, for every $f\in E(H)$ with $\chi,\lambda\in f$.
\end{claim} 
\begin{proof} 
Assume to the contrary that $m(f,\chi)=m(f,\lambda)=1$ for some $f\in E(H)$. Let $\zeta\in f\setminus\{\chi,\lambda\}$ such that $A(f)=\zeta$. 
Consider the columns labeled by the vertices in $f\cup\{\xi\}$, where $\xi\notin f$ is arbitrary. Let 
$g=(f\cup\{\xi\})\setminus \{\chi\}$  (see the table). 
\begin{table}[h!]
\centering
\begin{tabular}{c||r|r|r|r|r|r|c}
 H & $\chi$&$\lambda$&\ldots&$\zeta$&&$\xi$ & \\\hline\hline
$f$ &  1 & 1   &\ldots&-1&\ldots&0\\\hline
$g$ & 0 & -1& \ldots&1&\ldots&1\\\hline
\end{tabular}
\end{table}
Notice that $A(g)=\lambda $, because $g\in E(H^\prime)$ and  $\lambda$ is a  in $M(H^\prime)$ containing no $1$. 

For  $\zeta=A(f)$ the submatrix $\{f,g\}\times \{\lambda,\zeta\}$ is a forbidden $2\times 2$, a contradiction. Therefore, either  $m(f,\chi)=-1$ or  $m(f,\lambda)=-1$, and the claim follows. 
\end{proof}
Next we show by contradiction that either $\lambda$ or $\chi$ is a column not containing $1$ in $M(H)$. 

If $\lambda$ is not such a column of $M(H)$, then there is an edge  $\ell\in E(H)\setminus E(H^\prime)$ 
satisfying $m(\ell,\lambda)=1$ and $m(\ell,\chi)\neq 0$. By the Claim, $m(\ell,\chi)=-1$.

If $\chi$ is not such a column of $M(H)$ then there is an edge $h\in E(H^\prime)$ with $m(h,\chi)=1$. Observe that 
 $m(h,\lambda)\neq -1$, because otherwise, the submatrix $\{h,\ell\}\times \{\chi,\lambda\}$ is a forbidden $2\times 2$. Thus the Claim 
 implies $m(h,\lambda)=0$.
Let $\zeta=A(h)$, then $m(\ell,\zeta)=0$ since otherwise, $\{h,\ell\}\times \{\chi,\zeta\}$ is a forbidden $2\times 2$ (see the table).

\begin{table}[h!]
\centering
\begin{tabular}{c||r|r|r|r|r|}
K  & $\chi$&$\lambda$&$\zeta$&  \\\hline\hline
$\ell$ &  -1 & 1   &0&\ldots\\\hline
  $h$ &  1 & 0   &-1&\ldots\\\hline
$g$ & $\star$ & $\star$&1&\ldots\\\hline
&&&&
\end{tabular}
\end{table}

Let $g\in E(H)$ be an edge with $\chi,\lambda,\zeta\in g$. The Claim  implies that the marked vertex in $g$ is one of the $\star$ entries, in each case producing a forbidden $2\times 2$, 
{a contradiction. Therefore, either $\chi$ or $\lambda$ is a column of $M(H)$ not containing $1$.}

An agreeing linear order of $H$ is obtained by removing the vertex corresponding to the least 
column $\lambda^*\in \{\lambda,\zeta\}$, 
then by taking an agreeing linear order for the $(n-1)$-element set $V(H)\setminus\{\lambda^*\}$, finally by completing this order with $\lambda^*$ as the least element. 
\end{proof}

\section{$1$-extreme marked hypergraphs}
\label{1extreme}
Let $H$ be an $r$-uniform hypergraph (not necessarily a clique), and let $\widehat{e}\in e$ be a dedicated vertex in each edge $e\in E(H)$. 
 This hypergraph will be called a {\it $1$-extreme marked hypergraph}. {The name `$1$-extreme marked' indicates that the marked vertex of an edge should become either minimum or maximum among the vertices of each edge in an agreeing linear order.} 
 A linear order $L$ of a set $U\subseteq V(H)$ is  an {\it agreeing linear order} on $U$ provided $\widehat{e}$ is either $L$-minimal  or $L$-maximal among the vertices of $e$ for
every edge $e\subseteq U$. An agreeing linear order on $V(H)$ is also called  an {agreeing linear order of $H$}.

The edge/vertex incidence matrix $M(H)$  is defined for every $e\in E(H)$ and $\alpha\in V(H)$ by the entries
$$m(e,\alpha)=\left\{
\begin{array}{rl}
0 &\hbox{if $\alpha\notin e$}\\
-1 &\hbox{if $\alpha=\widehat{e}$}\\
1 &\hbox{if $\alpha\in e\setminus \{\widehat{e}\}$}
\end{array}
\right.
$$ 

\subsection{Auxiliary graphs}

Let $H$ be a $1$-extreme marked hypergraph, and let $L$ be an agreeing linear order of $H$.
{}
A $2\times 2$ submatrix of $M(H)$ equal to
$
\begin{tabular}{|r|r|}\hline
   -1 & 1   \\\hline
   1 & -1\\\hline
\end{tabular}
$  
or its permutation is called an $F$-matrix; a 
$2\times 2$ submatrix of $M(H)$ equal to
$
\begin{tabular}{|r|r|}\hline
   1 & -1   \\\hline
   1 & -1\\\hline
\end{tabular}
$ or its permutation 
is called an $S$-matrix.
The two vertices corresponding to 
the columns  of an $F$-matrix
are  extremes of different types, one is the $L$-minimal, the other is the $L$-maximal element of the edges corresponding to the rows. 
Meanwhile,  
the column containing $-1$ of an $S$-matrix is either $L$-minimal or $L$-maximal element in both edges.

We associate a graph $SF(H)$  to $H$ in two steps as follows. First let
 $G$ be a graph with $V(G)=E(H)$ and for $e,f\in E(H)$ let $ef$ be an edge in $G$  labeled with $S$ or $F$ 
 if and only if there exists an $S$-matrix or an $F$-matrix 
with rows $e$ and $f$, respectively.  
Now $SF(H)$ is obtained from $G$ by contracting all $S$-edges, then  eliminating multiple $F$-edges and $S$-loops.
\begin{table}[h!]
\centering
 \begin{tabular}{c||c|c|c|c|c|}
  $H$& $\chi$ &  $\alpha$ & $\beta$ &$\gamma$ &$\xi$ \\\hline\hline
  $e$&  0&-1& 1&1&0 \\\hline 
 $f$&  0 &1& -1&0 &1\\\hline
  $g$  &  0 &0 &-1&1   &1\\  \hline
   $h$  &  1   &0 &1&-1 &0\\  \hline
 $j$ &   1   &1&0&-1&0 \\\hline 
 &   &&&&
 \end{tabular}
 \vskip1em
 \caption{}
 \label{n=5}
 \end{table}

As an example consider the hypergraph $H$ with vertex set  $V=\{\alpha,\beta,\gamma,\chi,\xi\}$ including
 the edges $e,f,g,h$ and $j$ marked as in Table~\ref{n=5}. Then
 $SF(H)$ is a triangle on the compound vertices $\{f,g\}, \{h,j\}$ and $\{e\}$.
 
 An agreeing linear order $L$ of $H$ defines a natural $2$-coloring of the edges  of $H$:
 color $A$ or $B$ will be assigned to $e\in E(H)$ if $\widehat{e}$ is $L$-minimal or $L$-maximal in $e$, respectively. 
 In other words, $E(H)=E_A\cup E_B$, where $E_A$ are edges of a min-marked subhypergraph of $H$ and  $E_B$ are edges of a `max-marked' subhypergraph of $H$, which is equivalent to a min-marked hypergraph.
 
{To obtain a characterization similar to the one in Proposition ~\ref{digraph} for
min-marked hypergraphs, assume that $SF(H)$ is two-colorable (bipartite), we consider any proper two-coloring with $A$ and $B$, and 
we associate to this $A,B$-coloring an auxiliary directed graph $AB(H)$ on vertex set $V(H)$ as follows.
 Each compound vertex represents a class of `$S$-equivalent edges' of  $H$;
assign the color of a compound vertex $\gamma$ of $SF(H)$ to all edges belonging to the class represented by $\gamma$.
Thus we obtain a partition $E(H)=E_A\cup E_B$, where 
$E_A=\{e\in E(H) :  \hbox{\; if  $e$ is colored with $A$}\}$ and 
$E_B=\{e\in E(H) :   \hbox{\; if  $e$ is colored with $B$}\}$. 
 For every $e\in E(H)$ and $\alpha,\beta\in e$, 
$\alpha\rightarrow\beta$ is an arc, if either $e\in E_A$ and $\widehat{e}=\alpha$, or    
$e\in E_B$ and $\widehat{e}=\beta$. }

On the analogy of Proposition ~\ref{digraph} we obtain a straightforward specification of $1$-extreme marked hypergraphs
in terms of auxiliary graphs.
\begin{proposition}
\label{1exchar}
A $1$-extreme marked hypergraph $H$ has an agreeing linear order if and only if
{$SF(H)$ contains no odd cycle and there is an $AB(H)$ graph associated with some proper two-coloring of $SF(H)$ that contains no  directed cycle.}
\end{proposition}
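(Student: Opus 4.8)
The plan is to read the two displayed conditions as a faithful encoding of what an agreeing linear order must accomplish, exactly as Proposition~\ref{digraph} did for min-marked hypergraphs: an agreeing order must (i) make a globally consistent choice, edge by edge, of whether the marked vertex sits at the bottom or the top of its edge, and (ii) linearize the resulting precedence constraints. I would prove the two implications separately, relying throughout on the semantics of the $F$- and $S$-matrices recorded just before the statement.

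For \emph{necessity}, suppose $L$ is an agreeing linear order and color each $e\in E(H)$ with $A$ if $\widehat{e}$ is $L$-minimal in $e$ and with $B$ if it is $L$-maximal. I would first check that this coloring is constant on each $S$-class, so that it descends to the compound vertices of $SF(H)$: for an $S$-matrix on rows $e,f$ and columns $\alpha,\beta$ one has $\widehat{e}=\widehat{f}=\beta$ with $\alpha\in e\cap f$, and since $\beta$ cannot lie below $\alpha$ in one edge and above it in the other, $e$ and $f$ receive the same color. Next, for an $F$-matrix on rows $e,f$ and columns $\alpha,\beta$ (so $\widehat{e}=\alpha$, $\widehat{f}=\beta$), if $\alpha$ is the minimum of $e$ then $\alpha<_L\beta$, which forces $\beta$ to be the maximum of $f$, and symmetrically; hence $F$-adjacent classes get opposite colors. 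Thus the coloring is a proper two-coloring of $SF(H)$, which in particular forbids odd cycles. Finally, every arc $\alpha\to\beta$ of the associated $AB(H)$ records a relation $\alpha<_L\beta$ that $L$ satisfies by the very definition of the coloring, so $AB(H)$ sits inside the strict order of $L$ and is therefore acyclic.

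For \emph{sufficiency}, start from a proper two-coloring of $SF(H)$ whose $AB(H)$ has no directed cycle. Since $AB(H)$ is then a directed acyclic graph, it admits a topological order, which I would extend arbitrarily over any vertices absent from its arcs to obtain a linear order $L$ of $V(H)$. I would then verify directly that $L$ is agreeing: each edge $e$ belongs to exactly one $S$-class and hence carries one color; if the color is $A$ then every arc $\widehat{e}\to v$ with $v\in e\setminus\{\widehat{e}\}$ gives $\widehat{e}<_L v$, making $\widehat{e}$ the $L$-minimum of $e$, while if the color is $B$ the arcs $v\to\widehat{e}$ make $\widehat{e}$ the $L$-maximum. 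In either case $\widehat{e}$ is an extreme of $e$, as required.

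The delicate part is purely the matrix bookkeeping underlying the construction of $SF(H)$ and $AB(H)$: one must confirm that an $S$-matrix forces equal color and an $F$-matrix forces opposite color, and that contracting $S$-edges while retaining $F$-edges captures precisely these constraints and no others. I expect no genuine obstacle beyond this verification, since $SF(H)$ and $AB(H)$ were defined so that ``agreeing linear order'' factors cleanly into ``consistent assignment of extreme types'' (the proper two-coloring) together with ``consistent linearization of the induced precedences'' (acyclicity of $AB(H)$), in direct parallel with the argument of Proposition~\ref{digraph}.
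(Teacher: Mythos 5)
Your proposal is correct and follows essentially the same route as the paper's proof: necessity via the $L$-induced $A/B$-coloring (constant on $S$-classes, alternating across $F$-edges, with every arc of $AB(H)$ realized in $L$), and sufficiency via a topological order of the acyclic $AB(H)$. The only difference is that you spell out the $S$-/$F$-matrix color-forcing verification inside the proof, whereas the paper records those observations in the preceding discussion of the auxiliary graphs.
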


\begin{proof} Let $L$ be an agreeing linear order of $H$. As described earlier, $L$ defines a two-coloring, that is,  
a partition $E(H)=E_A\cup E_B$, where $E_A=\{e\in E(H) : \widehat{e} \hbox{\; is $L$-minimal in } e\}$ and 
$E_B=\{e\in E(H) : \widehat{e} \hbox{\; is $L$-maximal in } e\}$. 
{Notice that the color is the same for all edges represented by any given
compound vertex $\gamma$ of $SF(H)$, therefore  $SF(H)$ is a bipartite graph (and contains no odd cycle as required). Consider the $AB(H)$ graph associated with this two-coloring.} Let $\alpha, \beta\in e$, for some $e\in E(H)$.
If $\alpha\rightarrow\beta$ is an arc in $AB(H)$, then $\alpha <_L  \beta$, because $L$ agrees with $e$. Therefore, $AB(H)$ contains no directed cycle.

Next we prove that the requirements on the auxiliary graphs are sufficient for the existence of an agreeing linear order.
{The graph $SF(H)$ is bipartite, so its vertices have a proper two-coloring.   
Since $AB(H)$ associated with some $A,B$-coloring of $SF(H)$}, it has no directed cycle, there is a linear ordering $L$ on $V(H)$ such that $\alpha\rightarrow\beta$ implies
$\alpha <_L  \beta$. Observe that if $e\in E_A$ then for every $\beta\in e\setminus \{\widehat{e}\}$ we have $ \widehat{e}\rightarrow\beta$,
that is $ \widehat{e}$ is $L$-minimal in $e$.
Similarly, if $e\in E_B$ then for every $\alpha\in e\setminus\{ \widehat{e}\}$ we have $\alpha\rightarrow \widehat{e}$, hence
$ \widehat{e}$ is $L$-maximal in $e$.
\end{proof}

\subsection{$1$-extreme marked cliques}

{We will see here that the characterization of $1$-extreme marked hypergraphs in Proposition~\ref{1exchar} leads to the somewhat unexpected 
outcome that no Helly-type theorem exists for $1$-extreme marked cliques. }

\begin{proposition}
\label{general}
For every $r\geq 3$ and $n\geq r+1$ such that $n-r$ is even, there exists a   $1$-extreme marked $r$-uniform clique $H$ on $n$ vertices such that any subhypergraph of $H$ on $n-1$ vertices has an  agreeing linear order but $H$ does not.
\end{proposition}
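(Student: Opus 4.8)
The plan is to exploit the characterization in Proposition~\ref{1exchar}: a $1$-extreme marked clique fails to have an agreeing linear order as soon as its graph $SF(H)$ is non-bipartite. So it suffices to construct, for each admissible pair $(r,n)$, a clique $H$ whose $SF(H)$ contains an odd cycle, while every $SF(H-v)$ is bipartite and carries an acyclic $AB$-graph. First I would write $n=r+2t$ with $t\geq 1$ (this is exactly the parity hypothesis $n-r$ even, together with $n\geq r+1$), and aim for an $SF(H)$ that is a single cycle of length $n-r+1=2t+1$, which is odd.

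For the construction I would proceed by induction on $t$, the base case $t=1$ (that is $n=r+2$) being the $r$-uniform analogue of the triangle exhibited in Table~\ref{n=5}: three marked positions $\alpha,\beta,\gamma$ carrying edge-classes joined pairwise by $F$-matrices, with the remaining $r-1$ vertices of each edge used to create the $S$-equivalences that fuse parallel edges into single compound vertices of $SF(H)$. The inductive step introduces two new vertices and a short chain of new edge-classes that splices into the existing $SF$-cycle, lengthening it by $2$ and thereby preserving its oddness; every remaining edge of the clique is marked \emph{coherently} with a fixed reference linear order (say min-marked with respect to it) so that it contributes only $S$-edges and ``forward'' $F$-edges that attach to the designed cycle without shortcutting it.

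Given such an $H$, the non-existence of an agreeing order is immediate: the designed odd cycle makes $SF(H)$ non-bipartite, so Proposition~\ref{1exchar} applies. For criticality I would show that deleting any vertex $v$ destroys every odd cycle. Removing $v$ deletes precisely the edges through $v$; by construction these form a contiguous arc of the cyclic edge-pattern, so the surviving edge-classes of the designed cycle form a path, and $SF(H-v)$ becomes bipartite. I would then read off a proper two-coloring and check that the induced $AB(H-v)$ is acyclic, so that the second half of Proposition~\ref{1exchar} furnishes an agreeing order for $H-v$; alternatively one can simply write down the order obtained from the reference order after the single ``cut''.

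I expect the main obstacle to be the global bookkeeping over the whole clique rather than the cycle itself. The subtlety is to mark all $\binom{n}{r}$ edges so that (i) none of the extra $S$-relations fuse two distinct classes of the designed cycle (which would shorten or collapse it and could even restore bipartiteness, destroying the non-agreeing property), and (ii) the extra $F$-relations create no \emph{second} odd cycle through any single vertex, since otherwise some $H-v$ would still be non-bipartite and criticality would fail. Verifying both conditions uniformly in $r$ and $t$, and confirming $AB$-acyclicity for each of the $n$ deletions, is where the real work lies; the hypothesis that $n-r$ is even is precisely what makes the single designed cycle odd while forcing every $(n-1)$-vertex restriction to break it.
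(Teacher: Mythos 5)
There is a genuine gap: what you have written is a plan, not a proof. The two conditions you yourself flag as ``where the real work lies'' --- (i) that no $S$-relation coming from the bulk of the $\binom{n}{r}$ edges fuses two classes of the designed odd cycle, and (ii) that no second odd cycle of $SF(H)$ survives the deletion of a vertex --- are precisely the content of the proposition, and neither the base case, nor the inductive splicing step, nor the marking of the remaining edges is ever written down explicitly, so these conditions cannot be checked. The difficulty is real, not just bookkeeping: if you min-mark the bulk of the clique with respect to a reference order, those edges produce no $F$-matrices among themselves, but they do produce many $F$-matrices with the edges of your designed cycle (for $r=3$, any triple $\{i,i+1,x\}$ min-marked at $i$ forms an $F$-matrix with an edge marked at $i+1$ containing $i$), and many $S$-matrices that can merge compound vertices; so the goal of $SF(H)$ being ``a single cycle of length $2t+1$'' is in tension with your own marking rule, and after uncontrolled $S$-contractions an odd closed walk need not survive as an odd cycle. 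Your induction on $t$ also has no stated induction hypothesis strong enough to control the global $SF$-structure, and the criticality claim that the deleted edges ``form a contiguous arc of the cyclic edge-pattern'' is asserted of a construction that does not yet exist. Finally, even where $SF(H-v)$ is bipartite you still owe an acyclic $AB(H-v)$ for some proper two-coloring, which you defer to ``simply write down the order after the single cut'' without doing so.

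For comparison, the paper avoids all of this machinery. It takes $V(H)=\{1,\dots,n\}$ with $n$ odd (for $r=3$), marks the middle vertex in each cyclically consecutive triple $e_i=\{i,i+1,i+2\}$, and min-marks every other triple; non-existence follows from a two-line parity argument (an agreeing order would properly $2$-color the edges of an odd cycle), with no reference to $SF(H)$ at all, and for each $H-\xi$ an agreeing order is exhibited explicitly as a zigzag (downstairs vertices increasing, then upstairs vertices decreasing), which sidesteps the $AB$-acyclicity verification. The general case $r\geq 4$ is then obtained by adjoining $r-3$ vertices common to the lifted triples. If you want to salvage your approach, the most direct repair is to adopt a concrete construction of this kind and replace the $SF$/$AB$ analysis of $H-v$ by an explicit agreeing order.
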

\begin{proof}
We provide a construction for the case $r=3$, which will be extended for every $r\geq 4$.

\emph{Case $r=3$.} Notice that, by assumption, $n$ is odd.\\
Let $V(H)=\{1,2,\ldots,n\}$, and  let $e_i=\{i,i+1,i+2\}$  be edges with marked vertex $\widehat{e_i}=i+1$, for $i=1,2,\ldots,n$,  (modulo~$n$). For each $3$-set $f=\{\alpha_1,\alpha_2,\alpha_3\}\subseteq V$, with $\alpha_1<\alpha_2<\alpha_3$, different from all $e_i$, $i=1,2,\ldots,n$, let  $\widehat{f}=\alpha_1$.

Suppose an agreeing linear order $L$ exists. Let $C$ be the (odd) cycle on the vertex set $V(H)$ with edges $\{i,i+1\}$ (cyclically) for each $i$. The linear order $L$ induces a $2$-coloring on the edges as follows: $\{i,i+1\}$ is red, if $i>_L i+1$, and blue if $i<_L i+1$. The condition $\widehat{e_i}=i+1$ implies that this is a proper edge coloring of an odd cycle with $2$ colors, a contradiction.

Now let $\xi\in V(H)$, and let $H'=H-\xi$. Organize the vertices of $H'$ along a polygonal path, as shown below. (In this picture $\xi$ is even. If $\xi$ is odd, the only difference is the up/down position of the first and last point.)

\begin{center}
\begin{tikzpicture}
\draw 
(-1,0) node [anchor=south] {$\xi+1$} 
-- (-.5,-1) node [anchor=north] {$\xi+2$} 
--(0,0) node [anchor=south] {\dots} 
-- (0.5,-1) node [anchor=north] {\dots} 
-- (1,0) node [anchor=south] {$n-2$} 
-- (1.5,-1) node [anchor=north] {$n-1$} 
-- (2,0) node [anchor=south] {$n$} 
--  (2.5,-1) node [anchor=north] {$1$} 
 -- (3,0) node [anchor=south] {$2$} 
 -- (3.5,-1) node [anchor=north] {$3$} 
 -- (4,0) node [anchor=south] {\dots} 
 -- (4.5,-1) node [anchor=north] {\dots}
 -- (5,0) node [anchor=south] {$\xi-2$} 
 -- (5.5,-1) node [anchor=north] {$\xi-1$};
\end{tikzpicture}
\end{center}

Then list the lower elements on the picture in increasing order, and list the upper elements in decreasing order, i.e., for $\xi$ even, let
$$L=(1,3,\ldots,\xi-1,\xi+2,\xi+4,\ldots,n-1,n,n-2,\ldots,\xi+1,\xi-2,\ldots,4,2).$$


 
The linear order $L$ clearly agrees with the edges $e_i\in E(H^\prime)$. For any other edge, if the marked vertex (i.e.\ lowest indexed vertex) is downstairs, it will be least in $L$; if it is upstairs, it will be greatest in $L$.

\emph{Case $r\geq 4$.} We define a $1$-extreme marked $r$-regular clique  satisfying the requirements as follows. Notice that  the condition $n-r\equiv 0 \pmod 2$ implies that $n_0=n-r+3$ is odd. Let  
$V_0=\{1,2\ldots,n_0\}$, and let $V=V_0\cup W$, where $W$ is an $(r-3)$-set disjoint from $V_0$.
Use the construction with vertex set $\{1,2\ldots,n_0\}$ as described  
above for the case $r=3$ to obtain a $1$-extreme marked $3$-regular clique $H_0$ on vertex set $V_0$ with no agreeing linear order. 

  For every $3$-set $f_0\in E(H_0)$ define the $r$-set $f=f_0\cup W$ with marked vertex  $\widehat{f}=\widehat{f_0}$. For any other $r$-set $e\subseteq V$ we have $|e\cap V_0|\geq 4$; define $\widehat{e}$ to be the smallest value in  $e\cap V_0$. Thus we obtain a  $1$-extreme marked $r$-uniform clique $H$. 
  
Edges that contain $3$ elements of $V_0$ prevent an agreeing linear order by a similar argument as above.

If $\xi\in W$ then $H-\xi$ becomes a $1$-extreme marked hypergraph such that the ordering of the values $1,2,\ldots,n_0$ followed by the elements of $W$ in arbitrary order is an agreeing linear order of $H-\xi$. 

For $1\leq \xi\leq n_0$ an agreeing linear order for $H-\xi$ is obtained by 
using an agreeing linear order $L_0$ of $H_0-\xi$, and  
by inserting the elements of $W$ 
 between $n_0-1$ and $n_0$ in any order. Equivalently, though less rigorously, $W$ is inserted between the ``downstairs'' and the ``upstairs'' vertices of $V_0$.
\end{proof}

\section{min\&max-marked cliques}
\label{2marked}
Let $H$ be an $r$-uniform clique such that for every $e\in E(H)$ two distinct vertices are marked as a {\it min-vertex} and a {\it max-vertex}, denoted 
$A(e)$ and $B(e)$, respectively. This hypergraph will be called a {\it min{\rm\&}max-marked clique}. 

A linear order $L$ of the vertex set of a 
min\&max-marked clique $H$ is called an {\em agreeing linear order}, provided  
$A(e)<_Lv<_LB(e)$, for every $e\in E(H)$ and $v\in e\setminus\{A(e),B(e)\}$. We prove Theorem~\ref{leftrightentry} in the following form.
\begin{theorem}
\label{2char}
For $r\geq 3$ an $r$-uniform min{\rm \&}max-marked clique $H$ with $n\geq 2r-2$ vertices has an
agreeing linear order 
if and only if 
there is an agreeing linear order on every $(2r-2)$-element subset of $V(H)$. Furthermore, the number $2r-2$ in the statement cannot be lowered.
\end{theorem}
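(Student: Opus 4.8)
The plan is to deduce Theorem~\ref{2char} from the $2$-extreme case (Theorem~\ref{main}) in two stages: first recover the underlying unoriented order, then pin down a single global orientation of the min/max labels. Given a min\&max-marked clique $H$, I would pass to the $2$-extreme marked clique $H^{*}$ obtained by forgetting the labels, i.e.\ setting $\partial e=\{A(e),B(e)\}$ for each edge $e$. Every min\&max agreeing order is in particular a $2$-extreme agreeing order of $H^{*}$, and conversely a $2$-extreme agreeing order $L$ of $H^{*}$ is a min\&max agreeing order of $H$ exactly when $A(e)<_{L}B(e)$ holds for all edges $e$ simultaneously. Necessity of the Helly condition is immediate, since the restriction of an agreeing order is agreeing, so I would concentrate on sufficiency. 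Assuming every $(2r-2)$-subset of $V(H)$ has a min\&max agreeing order, each such subset has in particular a $2$-extreme agreeing order; hence by Theorem~\ref{main} the clique $H^{*}$ has a $2$-extreme agreeing order $L$. What remains is to show that, after possibly replacing $L$ by its dual $L^{d}$, every edge is correctly oriented.

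The crux is the orientation argument. Call an edge $e$ \emph{positive} or \emph{negative} according to whether $A(e)<_{L}B(e)$ or $B(e)<_{L}A(e)$; I want all edges to carry the same sign. The key local fact is uniqueness up to duality: for any $(2r-2)$-subset $U$, both $L|_{U}$ and the hypothesized min\&max agreeing order $L_{U}$ of $U$ are $2$-extreme agreeing orders of $U$, and since $|U|=2r-2\geq 2r-3$, Lemma~\ref{unique} gives $L_{U}\in\{L|_{U},(L|_{U})^{d}\}$ (for $r=3$, where $|U|=4$, the same uniqueness is checked directly — it is the classical fact that the betweenness relations of a linear order on at least three points determine that order up to reversal). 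Because every edge inside $U$ is correctly oriented in the min\&max order $L_{U}$, all such edges share one sign in $L$: all positive if $L_{U}=L|_{U}$, all negative if $L_{U}=(L|_{U})^{d}$. Now take any two edges $e,e'$ differing in a single vertex; they share $r-1\geq 2$ vertices, so $|e\cup e'|=r+1\leq 2r-2$, and since $n\geq 2r-2$ I can enclose $e\cup e'$ in a $(2r-2)$-subset $U$, forcing $e$ and $e'$ to have equal sign. As the single-vertex-exchange (Johnson) graph on the edges of an $r$-uniform clique is connected for $n\geq r+1$, all edges of $H$ carry a common sign, so $L$ or $L^{d}$ is a min\&max agreeing order. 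I expect the main obstacle to be exactly this propagation: local orientation agreement only constrains edges that fit together inside one $(2r-2)$-window, so one must verify both that $r+1\leq 2r-2$ (adjacent edges fit) and that uniqueness up to duality is available on each window, allowing the a priori independent data $L$ and $L_{U}$ to be compared; the constant $2r-2$ is what makes both of these hold simultaneously.

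For the final assertion I would lift the extremal example of Theorem~\ref{main4} to the min\&max setting. On $V=\{v_{1},\dots,v_{2r-2}\}$ I keep the same boundary data, $\partial e_{0}=\{v_{1},v_{r-1}\}$ for $e_{0}=\{v_{1},\dots,v_{r}\}$ and $\partial e=\{v_{i_{1}},v_{i_{r}}\}$ for every other edge, and orient each pair by index: let $A(e)$ be the smaller-indexed and $B(e)$ the larger-indexed boundary vertex. A min\&max agreeing order is in particular a $2$-extreme agreeing order, and the latter does not exist by Theorem~\ref{main4}, so $H$ has no min\&max agreeing order. On the other hand the two near-identity orders $L_{1}$ and $L_{0}$ from that proof (with $L_{0}$ swapping $v_{r-1}$ and $v_{r}$) are orientation-consistent with the index labelling, so $L_{1}$ min\&max-agrees with every edge except $e_{0}$ and $L_{0}$ with every edge except $e_{1}=\{v_{r-1},\dots,v_{2r-2}\}$. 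Since $e_{0}\cup e_{1}=V$, deleting any single vertex destroys $e_{0}$ or $e_{1}$, so each $(2r-3)$-subset $V\setminus\{v_{i}\}$ inherits a min\&max agreeing order from $L_{1}$ or $L_{0}$; hence $2r-2$ cannot be lowered. For $r=3$ the analogous clique on $4$ vertices already fails while all its three-vertex subhypergraphs, being single edges, trivially agree.
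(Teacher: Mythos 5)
Your argument is correct and follows essentially the same route as the paper's second proof of Theorem~\ref{2char}: forget the $A/B$ labels to reduce to Theorem~\ref{main}, then use connectivity of the Johnson graph to show that all edges are oriented consistently in the resulting order (or its dual), and lift the extremal example of Theorem~\ref{main4} for the tightness claim. The only difference is local: where you invoke Lemma~\ref{unique} on a $(2r-2)$-window (handling $r=3$ by the classical fact that betweenness determines a linear order up to reversal) to show that two edges with $|e\cup e'|=r+1$ carry the same sign, the paper instead derives a contradiction directly from a min\&max agreeing order on $e\cup f$ for an adjacent good/bad pair; both steps are sound.
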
 
\begin{proof} For the second statement we adapt the construction in Theorem~\ref{main4} to 
present a min\&max-marked clique $H$ on  $n=2r-2$ vertices  such that
the vertex set of each subhypergraph of $H$ with $2r-3$ vertices has an agreeing linear order but $H$ does not.

Let 
 $V(H)=\{v_1,v_2,\ldots,v_{2r-2}\}$; for $e_0=\{v_1,\ldots,v_{r-1},v_r\}$ define 
 $A(e_0)=v_{1},B(e_0)=v_{r-1}$, and for every   $e=\{v_{i_1},v_{i_2},\dots,v_{i_r}\}$, $1\leq i_1<i_2<\ldots i_r\leq 2r-2$,  different from $e_0$ define  $A(e)=v_{i_1},B(e)=v_{i_r}$. 
 Let  $e_1=\{v_{r-1},v_r,\ldots,v_{2r-2}\}$. 

In an agreeing linear order $L$ of $V(H)$ 
we have $v_r<_L B(e_0)=v_{r-1}=A(e_1)<_L v_r$, a contradiction. 
Observe next that $L_1=(v_1,\ldots,v_{r-1},v_r,\ldots,v_{2r-2})$ agrees with all edges of $H-e_0$, and 
$L_0=(v_1,\ldots,v_{r},v_{r-1},\ldots,v_{2r-2})$  agrees with all edges of $H-e_1$.
 Because  $e_0\cup e_1=V(H)$, no subhypergraph $H-v_i$ contains both $e_0$ and $e_1$. Therefore, either $L_0-v_i$ or  $L_1-v_i$ is an agreeing linear order of $V(H)\setminus\{v_i\}$, for every $1\leq i\leq 2r-2$.
 
 \vskip.1cm
 To prove the first part of the theorem define a directed graph $G$ on $V=V(H)$ with an edge $(x,y)$ from $x$ to $y$ if 
 either $x=A(e)$ or $y=B(e)$, for some $e\in E(H)$, $x, y\in e$.
 Now $H$ has an agreeing linear order if and only if the vertices of $G$ have a labeling $v_1,v_2,\ldots, v_n$ such that each arc $(v_i,v_j)$ of $G$ implies $i<j$. This labeling of $V$ exists if and only if  $G$ has no directed cycle. Assume now that $H$ satisfies the Helly-condition. 
Observe that $G$ has  no directed $2$-cycle, because if it exists and is induced by $e,f\in E(H)$, then $|e\cup f|\leq 2r-2$, contradicting the condition that on 
every $(2r-2)$-element subset of $V(H)$ there is an agreeing linear order.

Assume to the contrary that $G$ contains a directed cycle, 
let $C=(a_1,a_2,\ldots,a_k)$ be a shortest directed cycle of $G$, $k\geq 3$. 

Let the arc $(a_1,a_{2})$ of $G$ be induced by  some $e\in E(H)$; without loss of generality we assume that 
$a_1=A(e)$. A shortest directed cycle has no chord, thus $e\cap C=\{a_1,a_{2}\}$. For the same reason, $C$ contains no $r$-tuple, hence $k\leq r-1$ and $|e\cup C|= (r+k)-2\geq r+2$. 

Let $f\subseteq e\cup C$ be an $r$-tuple containing $C$  and let $a=A(f)$.
If $a\in e\setminus C$, then  $(a,a_1)$ is a directed $2$-cycle;  if  $a=a_i$, $i=1,\ldots,k$, then 
 $(a,a_{i-1})$  is a directed $2$-cycle (with $a_0=a_k$), a contradiction.
\end{proof}
In addition to the direct proof of Theorem~\ref{leftrightentry} (see Theorem~\ref{2char} above)
we show how Theorem~\ref{leftrightentry}  follows as a corollary of Theorem~\ref{main}.  

\begin{proof}[Second proof of Theorem~\ref{2char}] 
Let $H$ be an $r$-uniform min\&max-marked clique with at least $2r-2$ vertices, and assume that there is an agreeing linear order on every $(2r-2)$-element subset of $V(H)$. Construct $H'$, a $2$-extreme marked clique from $H$ by making the marked vertices undistinguished, and apply Theorem~\ref{main} to get an agreeing linear order $L$, in the sense of Theorem~\ref{main}.

Every edge $e\in E(H)$ has the property that the smallest and the largest vertex of $e$ in $L$ form the set $\{A(e),B(e)\}$. Call $e$ ``good'', if the smallest vertex of $e$ in $L$ is $A(e)$, and the largest vertex is $B(e)$, and call it ``bad'' if it is the other way around. If every edge is good, then $L$ is an agreeing linear order in the sense of Theorem~\ref{2char}, so we are done. If every edge is bad, then the dual of $L$ will work, and we are done. So we just have to settle the case when there is a good edge $g$, and a bad edge $b$.

Recall that the Johnson graph  (see \cite{Alsp}) is defined on the $r$-element subsets of $\{1,\ldots,n\}$, as vertices, and two of these vertices ($r$-sets) $e$ and $f$ are adjacent, if $|e\cap f|=r-1$. An immediate observation is that the Johnson graph is connected.

On the path in the Johnson graph from the good edge $g$ to the bad edge $b$ (which are vertices of the Johnson graph), there are two elements $e,f\in E(H)$ such that $|e\cap f|=r-1$, and $e$ is good and $f$ is bad. Since $|e\cup f|=r+1\leq 2r-2$, there is an agreeing linear order $L'$ on $e\cup f$.

Note that $|\{A(e),B(e)\}\cap\{A(f),B(f)\}|\geq 1$. Without loss of generality, either $A(e)=B(f)$, or $A(e)=A(f)$. In the former case 
\[
A(f)<_{L'}B(f)=A(e)<_{L'}B(e),
\]
while in the latter,
\[
B(f)<_L A(f)=A(e)<_L B(e).
\]
In both cases, every element of $e\cap f$ should be both between $A(f)$ and $B(f)$, and $A(e)$ and $B(e)$, contradiction. 
\end{proof}

\section{Concluding remarks}

In this paper we established Helly-type results for the existence of an agreeing linear order,
for {\em complete} $r$-uniform hypergraphs, in four versions.
For 2-extreme marked, min-marked and min{\rm \&}max marked cliques we found Helly-type theorems and the Helly numbers.
For 1-extreme marked cliques we showed that there is no such theorem.

As a possible generalization of the questions discussed in the paper, 
D.~P\'alv\"olgyi (personal communication) proposed the question
of investigating Helly-type properties of cliques,
such that a poset is specified for each edge as ``marks''. An edge $e$ agrees with a linear order $L$, if $L|_e$ is a linear extension of the poset corresponding to $e$, and an agreeing linear order, as before, is a linear order on the vertex set that agrees with every edge.

Similarly to the proof of Theorem \ref{2char}, it can be shown that if none of the posets are antichains, and every set of $2r^2$ vertices has an agreeing order, then
there is an agreeing linear order of the vertices. 
However, determining the Helly-numbers (based on the posets) is wide open.

Thinking of $2$-extreme marked $3$-uniform hypergraphs, it is
a basic assumption (see \cite{Hunt,HuntK}) defining the betweenness relation
for all triples with prescribed `boundaries'. 
However, it is natural to ask, what is the situation for not necessarily complete hypergraphs. 
In this general case there is no Helly-type theorem in any of the four versions.
We show an example in the min-marked case.
For $m>2$ arbitrary, let  $u_1, \ldots, u_m$, $v^i_j$, $1\le i\le m$, $1\le j\le r-2$ be the vertices of $H$.
The hyperedges are $e_i=\{ u_i, u_{i+1}, v^i_1, \ldots, v^i_{r-2}\}$, $1\le i\le m$, cyclically.
Let $A(e_i)=u_i$. Clearly, there is no agreeing linear order for $H$, because the vertices 
$u_1, \ldots, u_m$ cannot be ordered in a desired way. However, if we remove any vertex, at least
one of the hyperedges is also deleted and then there is an agreeing order.

There are similarly easy counterexamples in the other cases too. 
But then, it is an interesting problem to find conditions on the original hypergraph that would still guarantee a Helly-type theorem.
In the 2-extreme marked, min-marked and min{\rm \&}max marked cases, is it enough to assume that
the original $r$-uniform hypergraph is dense (that is, the hypergraph has $\Omega(n^r)$ hyperedges)?



\end{document}